\documentclass[preprint,1p,number,sort&compress]{elsarticle}
\usepackage{amsmath,amsfonts,amsthm}
\usepackage{xcolor}

\usepackage{pgfplots}
\pgfplotsset{compat=newest}
\usetikzlibrary{plotmarks}
\usetikzlibrary{arrows.meta}
\usepgfplotslibrary{patchplots}
\usepackage{grffile}

\usepackage{hyperref}
\newcommand{\RR}{\mathbb{R}}

\newcommand{\rme}{\mathrm{e}}
\newtheorem{proposition}{Proposition}

\begin{document}
\myfooter[L]{}



\begin{frontmatter}
\title{A tensor-based exponential integrator for
       diffusion--reaction equations in common curvilinear coordinates}

\author[1]{Marco Caliari}
\ead{marco.caliari@univr.it}
\affiliation[1]{organization={Department of Computer Science, University of Verona},
                addressline={Strada le Grazie, 15}, 
                city={Verona},
                postcode={37134}, 
                country={Italy}}

\author[2]{Fabio Cassini\corref{cor1}}
\ead{fabio.cassini@unipd.it}
\affiliation[2]{organization={Department of Mathematics ``Tullio Levi-Civita'', University of Padua},
                addressline={Via Trieste, 63}, 
                city={Padova},
                postcode={35121}, 
                country={Italy}}

\cortext[cor1]{Corresponding author}

\begin{abstract}
In this paper, we study a tensor-based method for the numerical solution of
a class of diffusion--reaction equations defined on spatial domains that
admit common curvilinear coordinate representations. Typical examples
in 2D include disks (polar coordinates), and
in 3D balls or cylinders (spherical or cylindrical coordinates)
as well as spheres for problems involving the Laplace--Beltrami operator.
The proposed approach is based on a carefully chosen finite difference
discretization of the Laplace operators that yields
matrices with a structured representation as sums of Kronecker products.
For the time integration, we introduce a novel split variant of the
exponential Euler method that effectively handles the stiffness and avoids
the severe time step size restriction of classical explicit methods.
By exploiting the peculiar form of the obtained discretized operators
and the chosen splitting strategy,
we compute the needed action of the $\varphi_1$ matrix function
through suitable tensor-matrix
products in a $\mu$-mode framework.
We demonstrate the efficiency of the approach on a wide range of physically
relevant 2D and 3D examples of {\color{black}$N$-coupled}
diffusion--reaction systems
generating
Turing patterns with up to $10^6$ degrees of freedom.
\end{abstract}
\begin{keyword}
Exponential integrators \sep Finite differences \sep $\mu$-mode product
\sep Common curvilinear coordinates \sep
\sep Diffusion--reaction equations
\end{keyword}
\end{frontmatter}
\section{Introduction}
Diffusion--reaction equations represent one of the most elementary yet effective
approaches to mathematically model many interesting biological, physical
and chemical phenomena. We mention here, among the others, the
seminal books~\cite{B86,M03,S94}.
Despite their apparent simplicity, the presence of a nonlinear reaction term
does not
allow to determine, in general, an analytical solution. For this reason,
researchers
put a great effort to study appropriate numerical techniques to find
approximated solutions to such models~\cite{HV03}.

Concerning particularly the time discretization, one of the major difficulties
of developing
accurate and computationally feasible
numerical methods for diffusion--reaction equations stems from the linear part,
i.e., the Laplace operator. In fact, the presence of this diffusive term leads
us to address a \textit{stiff} problem, for which classical explicit time
integration
methods can not be employed unless a (typically impractical) time step size
restriction
is imposed~\cite{HW96}. On the other hand, the usage of stiff-resistant schemes
usually requires the evaluation of special functions,
such as the inverse (e.g., for implicit methods~\cite{ARW95})
or exponential-like functions
(e.g., for exponential integrators~\cite{HO10}).
{\color{black}Both cases typically involve using computationally expensive
  iterative methods at each time step, e.g., those based on Krylov subspace
  approximations.}
The efficient computation of such special functions is a crucial task to
obtain a
numerical integrator that can be used in practice. In this sense,
the geometry of the spatial domain under consideration plays a fundamental
role, since peculiar techniques can be adopted.
The Kronecker sum structure of the discretized operator on
the Cartesian product of intervals in two and three dimensions,
namely
\begin{equation}\label{eq:kronintro}
  I_2\otimes A_1 + A_2\otimes I_1 \quad \text{and} \quad
  I_3\otimes I_2\otimes A_1 + I_3\otimes A_2 \otimes I_1 +
  A_3\otimes I_2\otimes I_1,
\end{equation}
is {\color{black}an important example}. Here $A_\mu${\color{black}, $\mu=1,2,3$,}
is the discretization matrix
of a differential operator {\color{black}in one space dimension
(typically the second derivative)},
$I_\mu$ an identity matrix of suitable size,
and $\otimes$ denotes the Kronecker product.
In these cases, actions of matrix functions can be
computed with high efficiency by writing an equivalent matrix or tensor
formulation of the task~\cite{BS17,CC24,CC24bis,CC25,C24,CK20,DASS20}.
Clearly, the just mentioned tensor techniques are not valid for
{\color{black}arbitrary domains and
linear differential operators, since in general they do not admit
the Kronecker sum structure~\eqref{eq:kronintro}. However,
in some circumstances, it is possible to extend the
approaches to non-Cartesian domains, and relevant examples
comprise, for instance,
separable domains \cite{FS24},
polygonal domains
\cite{HS21}, and domains parametrized by
B-splines in the context of isogeometric analysis \cite{MJKL16}.

In the wake of this, the aim of this manuscript is to
extend tensor techniques to domains which admit
curvilinear coordinate representations. In particular,
we are interested in the numerical solution
to systems of $N$-coupled diffusion--reaction equations}
for which a component
$w\colon (0,t_*]\times \Omega\to\RR$ satisfies the prototypical Partial
  Differential Equation (PDE)
\begin{equation}\label{eq:pde}
  \left\{
  \begin{aligned}
  \partial_t w(t,\boldsymbol x) &=
  \Delta w(t,\boldsymbol x)
  + g(w(t,\boldsymbol x)),
  &t\in(0,t_*],\quad \boldsymbol x\in\Omega\subset
  \RR^{d},\\
  w(0,\boldsymbol x)&=w_0(\boldsymbol x),
  \end{aligned}\right.
\end{equation}
completed with appropriate boundary conditions.
The linear operator $\Delta$ denotes the Laplace operator
when the spatial domain $\Omega$ is a disk, a three-di\-mensional ball,
or a cylinder. On the other hand, it denotes
the Laplace--Beltrami operator
when $\Omega$ is a sphere.
In the numerical experiments,
we will also consider an anomalous diffusion case on a disk
in which the Laplace operator is substituted by an anomalous
(superdiffusive) counterpart.
The explicit form of the operator $\Delta$ in
curvilinear coordinates, for each of the
mentioned domains, will be recalled in the next section.
{\color{black}The nonlinear term $g$ accounts for the reaction
  among the different components.
The study of systems of diffusion--reaction equations}
is of great interest in the literature,
especially for the formation of the so-called Turing patterns
(that is,
spatially inhomogeneous states which may develop in the long time regime
when perturbing equilibria of the system). From the practical point of view,
the arising patterns are often
present in nature, see for instance~\cite{BKW18,DASS20,GLRS19,S79}.
While these phenomena are usually studied on the square or the cube,
we highlight that
the geometry of the domain itself may trigger different morphological classes
of patterns (see, for instance,~\cite{ATGBM02,FMS23,FSB24,LBFS17,LHAMMHH24,VAB99}).

{\color{black}
  In this paper, we also present how to couple the new matrix and tensor
  approaches with a novel
exponential integrator} for the time marching, namely a split version of the
exponential Euler method, which does not show a time step size restriction.
The remaining part of the manuscript is structured as follows.
{\color{black}In Section~\ref{sec:operators} we recall
  the relevant differential operators in curvilinear coordinates. In addition,
for ease of presentation,
  we describe the method of lines applied to the single prototypical
  equation~\eqref{eq:pde}, although the techniques straightforwardly
  apply to systems  of $N$-coupled diffusion--reaction equations.
We then proceed in Section~\ref{sec:space_disc}
by discussing an appropriate space discretization for each of the
arising one-dimensional operators.
We introduce the novel split version of the exponential Euler method and
present
how to efficiently compute the action of the
exponential-like $\varphi_1$ matrix
function in Section~\ref{sec:SExpE}.
We finally present in Section~\ref{sec:numer_exp}
several numerical experiments in two and three
space dimensions, with two- or four-coupled equations,
taken from relevant models available
in the literature, and draw the conclusions in Section~\ref{sec:conclusions}.}
\section{Semidiscretization in common curvilinear systems}\label{sec:operators}
Given the prototypical equation~\eqref{eq:pde} and the spatial domains $\Omega$
mentioned in the introduction, we recall here the differential operators
in the curvilinear systems and present their discretizations.
For the coordinate systems, we have:
\begin{subequations}
\begin{description}
\item[Polar coordinates.] If $\Omega$ is the disk of radius $\rho_*$
  {\color{black}in $\RR^2$},
  then $(\rho,\theta)\in (0,\rho_*]\times[0,2\pi)$
      and the Laplace operator becomes
\begin{equation}\label{eq:lapdisk}
  \Delta = \Delta_\mathrm{D}=\left(\frac{1}{\rho}\frac{\partial}{\partial \rho}
  +\frac{\partial^2}{\partial \rho^2}\right)
  +\frac{1}{\rho^2}\frac{\partial^2}{\partial \theta^2}.
\end{equation}
\item[Spherical coordinates.] If $\Omega$ is the 
  sphere of radius $\rho_*$ in $\RR^3$,
  then 
  $(\theta,\phi)\in[0,2\pi)\times(0,\pi)$
    {\color{black}(following the so-called mathematics convention)}
    and the Laplace--Beltrami operator becomes
\begin{equation}\label{eq:lapsphere}
  \Delta=\Delta_\mathrm{S} =
\frac{1}{\sin^2\phi}\frac{\partial^2}{\partial \theta^2}+
  \left(\cot\phi\frac{\partial}{\partial \phi}
+\frac{\partial^2}{\partial \phi^2}\right).
\end{equation}
If $\Omega$ is the three-dimensional ball (solid sphere) {\color{black}of
radius $\rho_*$ in $\RR^3$},
  then $(\rho,\theta,\phi)\in(0,\rho_*]\times[0,2\pi)\times(0,\pi)$
and the Laplace operator becomes
      \begin{equation}\label{eq:lapball}
        \Delta = \Delta_\mathrm{B}=
        \left(\frac{2}{\rho}\frac{\partial}{\partial \rho}+
        \frac{\partial^2}{\partial \rho^2}\right)
  +\frac{1}{\rho^2\sin^2\phi}\frac{\partial^2}{\partial \theta^2}
  +\frac{1}{\rho^2}\left(\cot\phi\frac{\partial}{\partial \phi}
+\frac{\partial^2}{\partial \phi^2}\right).
\end{equation}
    \item[Cylindrical coordinates] If $\Omega$ is the cylinder of
      radius $\rho_*$ and height $z_*$ {\color{black}in $\RR^3$},
      then $(\rho,\theta,z)\in(0,\rho_*]\times[0,2\pi)\times[0,z_*]$
          and the Laplace operator becomes
\begin{equation}\label{eq:lapcyl}
  \Delta = \Delta_\mathrm{C}=\left(\frac{1}{\rho}\frac{\partial}{\partial \rho}+
  \frac{\partial^2}{\partial \rho^2}\right)
  +\frac{1}{\rho^2}\frac{\partial^2}{\partial \theta^2}
  +\frac{\partial^2}{\partial z^2}.
\end{equation}
\end{description}
\end{subequations}

{\color{black}Following the approach of the method of lines, we}
now introduce the discretization points $\rho_i$,
  $\theta_j$,
    $\phi_k$, and
  $z_k$ for the spatial variables, and approximate the differential
  operators by {\color{black}centered} finite differences.
  For ease of notation, without loss of generality, in this and in the
  following section
  we consider $n$ discretization points in each coordinate.
  {\color{black}Moreover,
    we assume equispaced points with space step sizes
$h_\rho$, $h_\theta$, $h_\phi$, and $h_z$, respectively.}
  Then, PDE~\eqref{eq:pde} can be transformed into the following system
of Ordinary Differential Equations (ODEs) in \textit{vector} form
\begin{equation}\label{eq:ODE}
  \left\{
  \begin{aligned}
    \boldsymbol w'(t)&=M\boldsymbol w(t)
    +\boldsymbol g(\boldsymbol w(t))=\boldsymbol f(\boldsymbol w(t)),
    \quad t\in(0,t_*],\\
    \boldsymbol w(0)&=\boldsymbol w_0,
    \end{aligned}\right.
\end{equation}
where $\boldsymbol w(t)$ is the vector of the unknowns,
$M$ is the matrix which discretizes the {\color{black}diffusion operator
  $\Delta$},
and $\boldsymbol g(\boldsymbol w(t))$ is
{\color{black}the} nonlinear function {\color{black}corresponding to the reaction
  term $g(w(t,\boldsymbol x))$}.
We stress that we employ the expression \textit{vector} form since later
on we will solve equivalent \textit{matrix}- and \textit{tensor}-oriented
formulations.
The matrix $M$ turns out to be the
sum of Kronecker products of matrices which discretize one-dimensional
operators.
In fact, for polar coordinates on the disk we
have
\begin{subequations}\label{eq:domains}
{\color{black}
  \begin{equation}\label{eq:disk}
  \begin{aligned}
&\boldsymbol w(t)\in\RR^{n^2},
    && w_{i+(j-1)n}(t)= w(t,\rho_i,\theta_j)+\mathcal{O}(h_\rho^2)+
    \mathcal{O}(h_\theta^2),\\
&   M\in\RR^{n^2\times n^2},&&
    M=I\otimes A_{\rho,2}+
    A_\theta\otimes D_{\rho},
\end{aligned}
  \end{equation}
where $M$ is the discretized version of $\Delta_\mathrm{D}$,
while for spherical
coordinates on the sphere we have
\begin{equation}\label{eq:sphere}
  \begin{aligned}
&    \boldsymbol w(t)\in\RR^{n^2},
    && w_{j+(k-1)n}(t)= w(t,\theta_j,\phi_k)+
    \mathcal{O}(h_\theta^2)+\mathcal{O}(h_\phi^2),\\
    & M\in\RR^{n^2\times n^2},&&
    M=D_\phi\otimes A_\theta+
    A_\phi\otimes I.
\end{aligned}
\end{equation}
Here, $M$ is the discretized version of $\Delta_\mathrm{S}$.
Concerning the remaining three-dimensional cases,
for the ball in spherical coordinates we have
\begin{equation}\label{eq:ball}
  \begin{aligned}
&    \boldsymbol w(t)\in\RR^{n^3},
    && w_{i+(j-1)n+(k-1)n^2}(t)= w(t,\rho_i,\theta_j,\phi_k)+
    \mathcal{O}(h_\rho^2)+
    \mathcal{O}(h_\theta^2)+\mathcal{O}(h_\phi^2),\\
    & M\in\RR^{n^3\times n^3},&&   M=I\otimes I\otimes A_{\rho,3}
    +D_{\phi}\otimes A_\theta\otimes D_{\rho}
    +A_\phi\otimes I\otimes D_{\rho},
    \end{aligned}
\end{equation}
where $M$ is the discretized version of $\Delta_\mathrm{B}$,
while for the cylinder in cylindrical coordinates we have
\begin{equation}\label{eq:cylinder}
  \begin{aligned}
&    \boldsymbol w(t)\in\RR^{n^3},
    && w_{i+(j-1)n+(k-1)n^2}(t)=w(t,\rho_i,\theta_j,z_k)+
    \mathcal{O}(h_\rho^2)+
    \mathcal{O}(h_\theta^2)+\mathcal{O}(h_z^2),\\
& M\in\RR^{n^3\times n^3},&&  M=I\otimes I\otimes A_{\rho,2}
    +I\otimes A_\theta\otimes D_{\rho}
    +A_z\otimes I\otimes I.
    \end{aligned}
\end{equation}
Here, $M$ is the discretized version of $\Delta_\mathrm{C}$.}
\end{subequations}
In all the previous cases,
$I$ denotes the {\color{black}$n\times n$} identity matrix,
$D_{\rho}$ and $D_{\phi}$ denote the diagonal matrices of
entries $\rho_i^{-2}$ and $\sin^{-2}\phi_k$, respectively,
and the remaining $n\times n$
matrices denote discretizations of the
differential operators {\color{black}in one space dimension
  (see Table~\ref{tab:operators1d} for a summary).}
  \begin{table}
{\color{black}
    \begin{tabular}{llll}
      differential operator & spatial domain & boundary conditions & discretized operator\\
\hline
      $\frac{\partial^2}{\partial \theta^2}$ & $(0,2\pi)$ & periodic & $A_{\theta}$\\
      $\frac{d-1}{\rho}\frac{\partial}{\partial \rho}+\frac{\partial^2}{\partial \rho^2}$ $(d=2,3)$ & $(0,\rho_*)$ & not required at $0$, Neumann at $\rho_*$ & $A_{\rho,d}$\\
      $\cot\phi\frac{\partial}{\partial \phi}+\frac{\partial^2}{\partial \phi^2}$ & $(0,\pi)$ & not required & $A_{\phi}$\\
      $\frac{\partial^2}{\partial z^2}$ & $(0,z_*)$ & Neumann at 0,
      Dirichlet at $z_*$ & $A_{z}$
    \end{tabular}
    \caption{Summary of the differential operators in one space dimension with
      their spatial domain, boundary conditions and label for the discretized version.}
  \label{tab:operators1d}}
  \end{table}
The exact expression of these matrices will be derived in the next
section. Here, we simply
note that, together with the nonlinear term $\boldsymbol g$,
they take into account the boundary conditions of the
original PDE.

\section{Space discretization by finite differences}\label{sec:space_disc}
In this section, we derive appropriate space discretizations
of the above one-dimensional operators by centered
finite differences. {\color{black}We recall that the number of
  discretization points is $n$ in each coordinate and that the
  space step sizes are denoted by
$h_\rho$, $h_\theta$, $h_\phi$, and $h_z$.}

Concerning the coordinate $\theta$ {\color{black}and the corresponding differential
operator (see the first row in Table~\ref{tab:operators1d}), we perform a
discretization with second-order centered finite differences. Thus, we get}
\begin{equation}\label{eq:A_theta}
  \theta_j=j h_\theta,\quad j=1,\ldots,n,
  \quad h_\theta=\frac{2\pi}{n},\quad  A_\theta=
  \frac{1}{h_\theta^2}\begin{bmatrix}
    -2 & 1 &  &  1\\
    1 & -2 & \ddots & \\
     & \ddots & \ddots  & 1\\
    1 &  & 1 & -2
  \end{bmatrix}.
\end{equation}
This is a symmetric circulant tridiagonal matrix, with explicitly
known eigenpairs (see, e.g., \cite{BW15}), that we will
employ in each numerical experiment in Section~\ref{sec:numer_exp}.
For all the remaining matrices, namely $A_{\rho,d}$, $A_\phi$ and $A_z$,
we derive discretizations that 
yield a tridiagonal matrix
\begin{equation}\label{eq:tridiag}
A=  \begin{bmatrix}
    a_1 & b_1 &        &  \\
    c_1 & \ddots & \ddots    & \\
        & \ddots & \ddots &  b_{n-1}\\
            &        & c_{n-1} & a_n
  \end{bmatrix}.
\end{equation}
The specific values of $a_\ell$,
$b_\ell$, and $c_\ell$ clearly depend on the underlying operator
with its boundary conditions, but in all cases $A$
enjoys favorable properties which we will summarize in
Proposition~\ref{prop:tridiag}.

We start by considering the coordinate $\rho$ and
$A=A_{\rho,2}$
{\color{black}(see the second row in Table~\ref{tab:operators1d} with $d=2$).}
This will be employed
in the numerical experiments in Sections~\ref{sec:orderex},
\ref{sec:disk}, and \ref{sec:cylinder}.
We notice that
the singularity of the operator at $\rho=0$ introduced by
the coordinate system does not affect the smoothness
of our solutions. Therefore, we employ
standard centered finite differences on the computational grid
$\rho_i=\rho_1+(i-1)h_\rho$, $i=1,\dots,n$, with $\rho_1>0$ and 
$h_\rho=(\rho_*-\rho_1)/(n-1)$.
{\color{black}For the choice of the first point $\rho_1$ (and, consequently,
  of $h_\rho$) we proceed as already done in \cite{L00}, that is
  we choose $\rho_1=h_\rho/2$ (hence  $h_\rho=\rho_*/(n-1/2)$).
  In this way, if we
introduce the virtual point $\rho_0=\rho_1 -h_\rho$
and write the second-order approximation of the one-dimensional differential
operator at $\rho=\rho_1$, we get
\begin{equation*}
  \left(\frac{1}{\rho}\frac{\partial w}{\partial \rho}+
  \frac{\partial^2 w}{\partial \rho^2}\right)\bigg|_{\rho=\rho_1}=
  \frac{1}{\rho_1}\frac{w_2-w_0}{2h_\rho}+
  \frac{w_2-2w_1+w_0}{h_\rho^2}
  +\mathcal{O}(h_\rho^2)=\frac{-2w_1+2w_2}{h_\rho^2}
  +\mathcal{O}(h_\rho^2).
\end{equation*}
That is, the coefficient of the virtual value
$w_0$ is zero, without
requiring any condition at $\rho=0$.
Notice that this approach is also known as
half-point discretization (see, e.g., \cite{ATGBM02,BW15}).
Concerning the discretization at $\rho_n=\rho_*$, where
we assume that both the differential
equation and the homogeneous Neumann boundary condition are valid
(see~\cite[Sec.~I.5.3]{HV03}), we have
\begin{equation*}
  \left(\frac{1}{\rho}\frac{\partial w}{\partial \rho}+
  \frac{\partial^2 w}{\partial \rho^2}\right)\bigg|_{\rho=\rho_n}=
  \frac{2w_{n-1}-2w_n}{h_\rho^2}
  +\mathcal{O}(h_\rho^2).
\end{equation*}}%
To summarize, the discretization and the elements of the
matrix $A=A_{\rho,2}$ in \eqref{eq:tridiag} are given by
\begin{equation}\label{eq:disk_discr}
  \begin{aligned}
  \rho_i&=\frac{h_\rho}{2}+(i-1)h_\rho,
  \quad h_\rho=\frac{\rho_*}{n-\frac{1}{2}},\\
  a_\ell&=-\frac{2}{h_\rho^2},\quad
  b_\ell=\frac{2\ell}{(2\ell-1)h_\rho^2},\quad
  c_\ell=\frac{2\ell}{(2\ell+1)h_\rho^2},\ c_{n-1}=\frac{2}{h_\rho^2}.
  \end{aligned}
\end{equation}
    {\color{black}We proceed in a similar way
for the matrices $A_{\rho,3}$ and $A_\phi$, that is, we introduce virtual
points and require the coefficients of the
      virtual values to be zero. We start with
$A=A_{\rho,3}$
(see the second row in Table~\ref{tab:operators1d} with $d=3$),
which will be used in the numerical
experiment in Section~\ref{sec:ball}. After simple calculations,
we obtain $h_\rho=\rho_*/n$ and
$\rho_1=h_\rho$.} Therefore, the resulting
discretization is
\begin{equation}\label{eq:ball_discr}
\begin{aligned}
  \rho_i&=h_\rho+(i-1) h_\rho,\quad h_\rho=\frac{\rho_*}{n},\\
  a_\ell &= -\frac{2}{h_\rho^2},\quad
  b_\ell=\frac{\ell+1}{\ell h_\rho^2},\quad
  c_\ell = \frac{\ell}{(\ell+1)h_\rho^2},\ c_{n-1}=\frac{2}{h_\rho^2}.
\end{aligned}
\end{equation}
Concerning $A=A_\phi$
{\color{black}(see the third row in Table~\ref{tab:operators1d}),}
which will be used in the numerical
experiments in Sections~\ref{sec:sphere} and \ref{sec:ball},
we avoid the singularities of the operator
at $\phi=0$ and $\phi=\pi$ by introducing the symmetric computational grid
$\phi_k=\phi_1+(k-1)h_\phi$,
$k=1,\ldots,n$, with $\phi_1>0$ and $h_\phi=(\pi-2\phi_1)/(n-1)$,
and apply centered finite differences.
We assume $\phi_1 = \sigma h_\phi$ {\color{black}and require the
  coefficient of the virtual value
  to be zero.}
Then, the real number $\sigma$
and the space grid size $h_\phi$ are obtained by solving the
nonlinear system
\begin{equation}\label{eq:syscn}
\left\{\begin{aligned}
&h_\phi=\frac{\pi-2\sigma h_\phi}{n-1},\\
&-\frac{\cot (\sigma h_\phi)}{2h_\phi}+\frac{1}{h_\phi^2}=0.
\end{aligned}\right.
\end{equation}
The existence of a unique solution $(\sigma,h_\phi)$
is proved in Appendix~\ref{sec:Aphi}.
To summarize,
the discretization points and the elements~\eqref{eq:tridiag}
of the tridiagonal matrix $A=A_\phi$ are given by
\begin{equation}\label{eq:A_phi}
  \begin{aligned}
    \phi_k&=\sigma h_\phi+(k-1)h_\phi,\quad
    h_\phi=\frac{\pi}{n-1+2\sigma},\\
    a_\ell&=-\frac{2}{h_\phi^2},\quad
    b_\ell=\frac{1}{h_\phi^2}+\frac{\cot((\sigma+\ell-1)h_\phi)}{2h_\phi},\quad
    c_\ell=b_{n-\ell}.
  \end{aligned}
\end{equation}

For {\color{black} the coordinate $z$ and $A=A_z$
(see the last row in Table~\ref{tab:operators1d}),
which will be used in the numerical experiment in Section \ref{sec:cylinder},
we employ standard second-order finite differences and get}
\begin{equation}\label{eq:A_z}
  \begin{aligned}
    z_k&=(k-1)h_z,\quad h_z=\frac{z_*}{n},\\
    a_\ell&=-\frac{2}{h_z^2},\quad
    b_1=\frac{2}{h_z^2},\ b_\ell=\frac{1}{h_z^2},\quad
    c_\ell=\frac{1}{h_z^2}.
\end{aligned}
  \end{equation}
Notice that the eigenvalues and eigenvectors of this matrix can be explicitly
computed. In fact, for the $k$th eigenvalue
\begin{equation*}
  -\frac{2}{h_z^2}+\frac{2}{h_z^2}\cos\left(\frac{\pi(k-1/2)}{n}\right)
\end{equation*}
the corresponding eigenvector has $i$th component
equal to
\begin{equation*}
  \frac{\sin\frac{(n-i+1)\pi(k-1/2)}{n}}
  {\sin\frac{\pi(k-1/2)}{n}},
  \quad i=1,\ldots,n-1
\end{equation*}
and $n$th component equal to 1.
The proof follows the lines of that in \cite[Proposition 2.1]{KST99}.

As previously mentioned, in each case the derived discretization matrix enjoys
desirable properties. In fact, we have the following result.
\begin{proposition}\label{prop:tridiag}
  Let $A$ denote any of the matrices $A_\theta$,
  $A_{\rho,d}$, $A_\phi$, and
  $A_z$. Then, the following properties hold.
  \begin{enumerate}
  \item It has positive extra-diagonal elements.
  \item Its matrix exponential $\rme^{tA}$ is nonnegative for all $t\ge 0$.
  \item It is similar to a symmetric  matrix $S$.
  \item It can be diagonalized by a matrix $V$, that is $AV=V\Lambda$, with
    $V$ (weakly) well conditioned in the 2-norm.
  \item It has nonpositive eigenvalues.
  \end{enumerate}
\end{proposition}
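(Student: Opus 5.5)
The plan is to verify the five properties in order, each one building on the previous. The common tool is the tridiagonal structure~\eqref{eq:tridiag} with explicit entries from \eqref{eq:A_theta}, \eqref{eq:disk_discr}, \eqref{eq:ball_discr}, \eqref{eq:A_phi} and \eqref{eq:A_z}. The circulant $A_\theta$ is handled separately where needed, since it is already symmetric and its eigenpairs are known~\cite{BW15}.

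\emph{Item 1.} For $A_\theta$, $A_{\rho,d}$ and $A_z$ positivity of the extra-diagonal entries is read off the formulas. For $A_\phi$ only $b_\ell$ needs checking, since $c_\ell=b_{n-\ell}$. If the argument $x_\ell=(\sigma+\ell-1)h_\phi$ lies in $(0,\pi/2]$, then $\cot x_\ell\ge0$ and $b_\ell>0$. Otherwise $x_\ell\le(\sigma+n-2)h_\phi=\pi-(\sigma+1)h_\phi$ by \eqref{eq:A_phi}, so $\cot x_\ell\ge-\cot((\sigma+1)h_\phi)$. The second equation of \eqref{eq:syscn} gives $\cot(\sigma h_\phi)=2/h_\phi$, and cot is decreasing, so $\cot((\sigma+1)h_\phi)<2/h_\phi$. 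Hence $b_\ell>0$.

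\emph{Item 2.} By item 1, $A$ is a Metzler matrix. With $c=\max_\ell|a_\ell|$, the matrix $A+cI$ is entrywise nonnegative, so $\rme^{tA}=\rme^{-ct}\rme^{t(A+cI)}\ge0$ for $t\ge0$.

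\emph{Item 3.} Since $b_\ell c_\ell>0$, I take the diagonal matrix $G=\mathrm{diag}(g_1,\dots,g_n)$ with $g_1=1$ and $g_{\ell+1}/g_\ell=\sqrt{c_\ell/b_\ell}$. Then $S=G^{-1}AG$ is symmetric tridiagonal with off-diagonal entries $\sqrt{b_\ell c_\ell}$.

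\emph{Item 4.} Write $S=Q\Lambda Q^{\mathsf T}$ with $Q$ orthogonal and set $V=GQ$. Then $AV=V\Lambda$ and $\kappa_2(V)\le\kappa_2(G)=\max_\ell g_\ell/\min_\ell g_\ell$. I interpret ``weakly well conditioned'' as $\kappa_2(V)$ growing at most polynomially (in fact like $\mathcal O(n)$ or better), and compute $g_\ell$ case by case.
\begin{itemize}
\item For $A_{\rho,2}$, the ratio $c_\ell/b_\ell=(2\ell-1)/(2\ell+1)$ telescopes, except for a modified last factor. This gives $g_\ell^2\approx1/(2\ell-1)$, so $\kappa_2(G)=\mathcal O(\sqrt n)$.
\item For $A_{\rho,3}$, the ratio $c_\ell/b_\ell=\ell^2/(\ell+1)^2$ gives $g_\ell\approx1/\ell$, so $\kappa_2(G)=\mathcal O(n)$.
\item For $A_z$, only the first ratio differs from $1$, so $\kappa_2(G)=\sqrt2$.
\end{itemize}

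\emph{Item 4, case $A_\phi$: the main obstacle.} Here $g_{\ell+1}^2/g_\ell^2=b_{n-\ell}/b_\ell$, and no exact telescoping is available. I would write $b_\ell=h_\phi^{-2}\bigl(1+\tfrac{h_\phi}{2}\cot x_\ell\bigr)$. I would then compare each factor with $\sin(x_\ell+h_\phi/2)/\sin(x_\ell-h_\phi/2)$ up to factors $1+\mathcal O(h_\phi^2)$, which makes the product nearly telescope into a ratio of sines. The choice of $\sigma$ is what guarantees the factors are positive. The outcome should be $g_\ell^2\approx\sin\phi_\ell/\sin\phi_1$, hence $\kappa_2(G)=\mathcal O(1/\sqrt{\sin(\sigma h_\phi)})=\mathcal O(\sqrt n)$. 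Controlling the accumulated $1+\mathcal O(h^2)$ errors uniformly over $n$ factors, and near the endpoints where $\cot$ is large, is where care is required.

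\emph{Item 5.} The eigenvalues are real because $A$ is similar to $S$. By Gershgorin it then suffices to show that $|a_\ell|$ dominates the off-diagonal row sums, which I would do by computing row sums.
\begin{itemize}
\item For $A_{\rho,2}$: row 1 gives $-2+2=0$, interior rows give $c_{\ell-1}+b_\ell=\tfrac{2\ell-2}{2\ell-1}+\tfrac{2\ell}{2\ell-1}=2$ in units of $h_\rho^{-2}$, and the last row gives $2$. All row sums are zero, and $A_{\rho,3}$ behaves the same way.
\item For $A_\phi$: row 1 has $b_1=2/h_\phi^2$ by \eqref{eq:syscn}. Interior rows have $b_{n-\ell+1}+b_\ell$, whose cot terms cancel because $(\sigma+n-\ell)h_\phi=\pi-(\sigma+\ell-1)h_\phi$.
\item For $A_z$: the last row sum is negative and all others are zero.
\item For $A_\theta$: the row sums are zero.
\end{itemize}
In every case each Gershgorin disc lies in $\{\mathrm{Re}\,z\le0\}$, which proves the claim.
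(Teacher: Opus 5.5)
Your proposal is correct and follows the paper's proof essentially step by step. It uses the same cotangent-symmetry argument with $\cot(\sigma h_\phi)=2/h_\phi$ for item~1 and essential nonnegativity for item~2. For items~3--4 it uses the same diagonal symmetrizer (your $G$ is the paper's $\Xi$) with $V=GQ$ and $\mathrm{cond}_2(V)\le\mathrm{cond}_2(G)$, giving exactly the paper's values $\sqrt{2n-3}$, $n-1$ and $\sqrt2$. Item~5 is Gershgorin applied once the spectrum is known to be real, and your explicit row sums are the computation the paper leaves implicit.

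The only slip is in your sketch for $A_\phi$, which the paper also only sketches in Appendix~\ref{sec:Aphi}. Since $c_\ell=h_\phi^{-2}\bigl(1-\tfrac{h_\phi}{2}\cot\phi_{\ell+1}\bigr)$ and $b_\ell=h_\phi^{-2}\bigl(1+\tfrac{h_\phi}{2}\cot\phi_\ell\bigr)$, the near-telescoping gives $c_\ell/b_\ell\approx\sin\phi_\ell/\sin\phi_{\ell+1}$. Hence $g_\ell^2\approx\sin\phi_1/\sin\phi_\ell$, the reciprocal of what you wrote and consistent with your $A_{\rho,2}$ case. This does not change $\kappa_2(G)\approx1/\sqrt{\sin\phi_1}$. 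The final step $\mathcal O(1/\sqrt{\sin(\sigma h_\phi)})=\mathcal O(\sqrt n)$ additionally needs $\sigma$ bounded away from $0$, which the appendix provides ($\sigma\to1/2$).
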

\begin{proof}
The statements are obvious for $A=A_\theta$.

The positivity of the extra-diagonal elements of $A_{\rho,d}$
(formulas~\eqref{eq:disk_discr} and \eqref{eq:ball_discr})
and $A_z$ (formula~\eqref{eq:A_z}) is
  trivial. The positivity of $b_\ell$ and $c_\ell$
  for the matrix $A_\phi$ (see formula \eqref{eq:A_phi})
  can be easily obtained by exploiting the symmetries
of the cotangent function and the relation $\cot(\sigma h_\phi)=2/h_\phi$,
see system~\eqref{eq:syscn}.

Since all the extra-diagonal elements are nonnegative, $A$
is \emph{essentially nonnegative} and therefore its matrix exponential
$\rme^{tA}$ is nonnegative for all $t\ge 0$
(see, e.g., \cite[Thm.~I.7.2]{HV03}).

Given the matrix $\Xi$ with elements $\xi_1 = 1$ and
$\xi_{\ell+1}= \sqrt{(c_1\cdots c_{\ell})/(b_{1}\cdots b_{\ell})}$,
we have $A\Xi=\Xi S$, where $S$ is symmetric
(see~\cite[Prop.~2.1]{M25}).

Since $SQ=Q\Lambda$, where $Q$ is the orthogonal matrix of
the eigenvectors and
$\Lambda$ is the diagonal matrix of the (real) eigenvalues,
we have
\begin{equation*}
  A V = V\Lambda,\quad V=\Xi Q.
\end{equation*}
In particular, $\mathrm{cond}_2(V)\le
\mathrm{cond}_2(\Xi)$. By direct calculation, it is easy to see that
for $A=A_{\rho,d}$ the 2-norm of $\Xi$ is $1$, while
the 2-norm of its inverse is $\sqrt{2n-3}$ for $d=2$
and $n-1$ for $d=3$.
Therefore, $\mathrm{cond}_2(V)$ grows as $\sqrt{n}$ and $n$,
respectively. The condition number of the transformation matrix
of $A_\phi$ grows as $\sqrt{n}$, see Appendix~\ref{sec:Aphi}.
Hence, all these matrices are
``weakly well conditioned'', that is $\mathrm{cond}_2(V)$
grows like a small power of $n$ (see~\cite{BT92}).
Finally, the condition
number of the transformation matrix of $A_z$ is easily obtained
as $\sqrt{2}$ independently of $n$ and therefore it is ``well conditioned''.

Since the eigenvalues of the matrix are real, thanks to
Gershgorin's disk theorem they are nonpositive, thus
guaranteeing stability of the discretizations
(see~\cite[Sec.~I.3]{HV03}).
\end{proof}

\section{Split exponential Euler integrator}\label{sec:SExpE}
In this section, we introduce the novel exponential integrator that we will use
for the time integration of the stiff system of ODEs~\eqref{eq:ODE}.
The starting point is the classical first-order accurate
exponential Euler method {\color{black}(see~\cite[formula (1.6)]{HO10})}
\begin{equation}\label{eq:ee}
  \boldsymbol w_{n+1}=\boldsymbol w_n +
  \tau\varphi_1(\tau M)(M\boldsymbol w_n+\boldsymbol g(\boldsymbol w_n))=
  \boldsymbol w_n+
  \tau\varphi_1(\tau M)\boldsymbol f(\boldsymbol w_n),
\end{equation}
where $\boldsymbol w_n$ is an approximation to
$\boldsymbol w(t_n)$, $\tau$ is the time
step size (which we assume to be constant), and $\varphi_1$ is the
matrix function defined for a generic $X$ by
\begin{equation*}
  \varphi_1(X)=\sum_{i=0}^\infty \frac{X^i}{(i+1)!}=
  \int_0^1 \rme^{(1-\theta)X}d\theta.
\end{equation*}
For the cases under consideration in this manuscript, the matrix $M$ can be
written as $M_1+\dots +M_d$
(see formulas~\eqref{eq:domains}). This suggests considering the
\emph{split} approximation
  $\varphi_1(\tau M_1)\cdots\varphi_1(\tau M_d)$ to $\varphi_1(\tau M)$.
We will see in the following that
the computation of the action of the
$\varphi_1(\tau M_\mu)$ function is much simpler than that of
$\varphi_1(\tau M)$, thanks to the $\mu$-mode approach introduced
in~\cite{CC24bis,CCEOZ22,CCZ23bis} coupled with
suitable diagonalization techniques.
A similar splitting of the
$\varphi_1$ function has already been employed
in~\cite{CC24,CC24bis}. However, we highlight that in those settings
the relevant matrices did \emph{commute} (in fact, they were in the
form~\eqref{eq:kronintro}), while in our case they do
not. Nevertheless, by Taylor expansion we can easily see that
\begin{equation}\label{eq:splitorder}
  \varphi_1(\tau M)=\varphi_1(\tau M_1)\cdots
  \varphi_1(\tau M_d)+\mathcal{O}(\tau^2).
\end{equation}
Therefore, the chosen splitting
does not affect the convergence order
of the exponential integrator, since it introduces
a local error proportional to $\tau^3$.
Notice also that the order of the products in formula~\eqref{eq:splitorder}
does matter, and a different ordering would result in a different
approximation.
Throughout the manuscript, we call \emph{split exponential Euler}
the time-marching scheme
arising by inserting approximation~\eqref{eq:splitorder}
in method~\eqref{eq:ee}.
We now proceed by writing the equivalent matrix and tensor formulations of the
scheme, which in fact are the final forms of the method in each of the cases of
our interest.

We begin with the ODEs system
for the disk case \eqref{eq:disk} and
consider the action $M\boldsymbol w$ required by
the scheme, where
$\boldsymbol w$ denotes a generic column vector of length $n^2$.
We notice that
\begin{equation*}
  M\boldsymbol w =(I\otimes A_{\rho,2}+A_\theta\otimes D_\rho)\boldsymbol w
\end{equation*}
can be equivalently computed in matrix form as
\begin{equation*}
A_{\rho,2}\boldsymbol W+  D_\rho\boldsymbol WA_\theta,
\end{equation*}
where $\boldsymbol W$ is the order-2 tensor (i.e., a matrix)
whose element $w_{ij}$ coincides with the element
$w_{i+(j-1)n}$ of the vector $\boldsymbol w$.
Indeed,
this equivalence follows
from
\begin{equation}\label{eq:tucker_2d}
(L_2\otimes L_1)\boldsymbol w = \mathrm{vec}(L_1 \boldsymbol W L_2^{\sf T}),
\end{equation}
where $L_1$ and $L_2$ are two matrices of size $n \times n$ and $\mathrm{vec}$
is the operator which stacks the column of the input into
a suitable column
(see~\cite[formula (2)]{VL00}),
and taking into account that $A_\theta$
is a symmetric matrix.
We now
discuss the computation of the action $\varphi_1(\tau M_2)\boldsymbol w$ with
\begin{equation*}
M_2=A_\theta\otimes D_\rho.
\end{equation*}
Since $A_\theta$ can be written as
$Q_\theta\Lambda_\theta Q_\theta^{\sf T}$, where $Q_\theta$ is
the orthogonal matrix of the eigenvectors and $\Lambda_\theta$ the diagonal
matrix of the eigenvalues ${\lambda_{\theta}}_j$,
by exploiting the mixed-product property of $\otimes$ we get
\begin{equation*}
  A_\theta\otimes D_\rho=
  (Q_\theta \otimes I)(\Lambda_\theta \otimes D_\rho)
  (Q_\theta^{\sf T}\otimes I).
\end{equation*}
Hence, the $\varphi_1$ matrix function can be conveniently expressed as
\begin{equation*}
  \varphi_1(\tau (A_\theta\otimes D_\rho))=
  (Q_\theta \otimes I)\varphi_1(\tau (\Lambda_\theta \otimes D_\rho))
  (Q_\theta^{\sf T}\otimes I).
\end{equation*}
Concerning the application to a vector $\boldsymbol w$, from
equivalence~\eqref{eq:tucker_2d} we have
the matrix form $\boldsymbol WQ_\theta$ of
$(Q^{\sf T}_\theta\otimes I)\boldsymbol w$.
Then, the diagonal matrix function
$\varphi_1(\tau(\Lambda_\theta\otimes D_\rho))$
is computed by exploiting
the \textit{scalar} formula
\begin{equation*}
  \varphi_1(x)=\left\{\begin{aligned}
  &  \frac{\rme^x-1}{x}, && x\ne 0,\\
  & \quad \ 1, && x=0.
  \end{aligned}\right.
\end{equation*}
The result is stored in the order-2 tensor
$\boldsymbol \Phi_{\rho,\theta}$ with element
$\varphi_1(\tau {\rho_i^{-2}\lambda_\theta}_j)$ in position $(i,j)$.
Hence, the computation $\varphi_1(\tau(\lambda_\theta\otimes D_\rho))
(Q^{\sf T}_\theta\otimes I)\boldsymbol w$ is realized by
$\boldsymbol \Phi_{\rho,\theta}\circ (\boldsymbol WQ_\theta)$,
where $\circ$ denotes the Hadamard product. By using again
equivalence~\eqref{eq:tucker_2d}, we get
\begin{equation*}
  \varphi_1(\tau M_2)\boldsymbol w=
  \mathrm{vec}\left(\big(\boldsymbol \Phi_{\rho,\theta}\circ (\boldsymbol WQ_\theta)\big)Q_\theta^{\sf T}\right).
\end{equation*}
The final requirement is the computation
of the action of $\varphi_1(\tau M_1)\boldsymbol w$ in matrix form, where
\begin{equation*}
  M_1=I\otimes A_{\rho,2}.
\end{equation*}
This is realized by using the identity
\begin{equation*}
  \varphi_1(\tau (I\otimes A_{\rho,2}))=
  I\otimes\varphi_1(\tau A_{\rho,2}).
\end{equation*}
and therefore, again by formula~\eqref{eq:tucker_2d}, we have
\begin{equation*}
  \varphi_1(\tau M_1)\boldsymbol w=\mathrm{vec}\left(
  \varphi_1(\tau A_{\rho,2})\boldsymbol W\right).
\end{equation*}
Notice that the computation of the matrix $\varphi_1(\tau A_{\rho,2})$
can be conveniently performed by diagonalization,
since the transformation matrix of $A_{\rho,2}$ is weakly well
conditioned (see Proposition~\ref{prop:tridiag}).
In summary, the split exponential Euler scheme for the disk
case~\eqref{eq:disk} is
realized in matrix form as
\begin{subequations}\label{eq:sexpe}
\begin{equation}\label{eq:ee_disk}
  \left\{\begin{aligned}
  \boldsymbol F_{n}&=A_{\rho,2}\boldsymbol W_n+D_\rho\boldsymbol W_nA_\theta+
\boldsymbol G_n,\\
  \boldsymbol W_{n+1}&=\boldsymbol W_{n}+\tau
\varphi_1(\tau A_{\rho,2})\big(\boldsymbol \Phi_{\rho,\theta}
\circ(\boldsymbol F_nQ_\theta)\big)Q_\theta^{\sf T},
\end{aligned}\right.
\end{equation}
where $\boldsymbol W_n$ and
$\boldsymbol G_n$ are the order-2 tensors corresponding to
the vectors $\boldsymbol w_n$ and $\boldsymbol g(\boldsymbol w_n)$,
respectively.

The ODEs system for the sphere case \eqref{eq:sphere} can be treated in a
similar way.
In fact, the action of $M\boldsymbol w$ is computed as
\begin{equation*}
  A_\theta\boldsymbol WD_\phi+\boldsymbol WA_\phi^{\sf T}.
\end{equation*}
Then, for the matrix function $\varphi_1(\tau M_2)$, we have
\begin{equation*}
\varphi_1(\tau M_2)=  \varphi_1\left(\tau(A_\phi\otimes I)\right)=
  \varphi_1\left(\tau A_\phi\right)\otimes I
\end{equation*}
and therefore we compute its action on $\boldsymbol w$ as
\begin{equation*}
  \boldsymbol W \varphi_1(\tau A_\phi)^{\sf T},
  \end{equation*}
where the matrix $\varphi_1(\tau A_\phi)$
can be computed by diagonalization.
Moreover, the action $\varphi_1(\tau M_1)\boldsymbol w$ is realized by
\begin{equation*}
Q_\theta\big(\boldsymbol \Phi_{\theta,\phi}\circ(Q_\theta^{\sf T}\boldsymbol W)\big),
\end{equation*}
where 
$\boldsymbol \Phi_{\theta,\phi}$ is the order-2 tensor with element
$\varphi_1(\tau {\lambda_\theta}_j\sin^{-2} \phi_k)$
in position $(j,k)$.
The split exponential Euler integrator in the sphere case
\eqref{eq:sphere} is thus written in matrix
form as
\begin{equation}\label{eq:ee_sphere}
\left\{\begin{aligned}
\boldsymbol F_{n}&=A_{\theta}\boldsymbol W_nD_\phi+
\boldsymbol W_nA_\phi^{\sf T}+
\boldsymbol G_n,\\
    \boldsymbol W_{n+1}&=\boldsymbol W_{n}+\tau
Q_\theta\big(\boldsymbol \Phi_{\theta,\phi}\circ(Q_\theta^{\sf T}\boldsymbol F_n\varphi_1(\tau A_\phi)^{\sf T})\big).
\end{aligned}\right.
  \end{equation}

We now consider the ODEs system  for the three-dimensional ball
case \eqref{eq:ball}.
First of all, for a column vector $\boldsymbol w$ of $n^3$ elements, the
action $M\boldsymbol w$ can be equivalently computed in tensor form as
\begin{equation*}
\boldsymbol W\times_1 A_{\rho,3}+
  \boldsymbol W\times_1 D_\rho\times_2 A_\theta\times_3 D_\phi+
  \boldsymbol W\times_1 D_\rho\times_3 A_\phi,
\end{equation*}
where $\boldsymbol W$ is the order-3 tensor whose element $w_{ijk}$
coincides with the element $i+(j-1)n+(k-1)n^2$
of the vector $\boldsymbol w$.
The symbol $\times_1$ denotes the 1-mode matrix-tensor product (multiplication
of the matrix onto the columns of the tensor), $\times_2$ the 2-mode
product (multiplication
onto the rows of the tensor), and
$\times_3$ the
3-mode product (multiplication onto the tubes of the tensor),
see~\cite[Sec.~2.5]{KB09}.
The equivalence follows from
the natural generalization of formula~\eqref{eq:tucker_2d}, that is
\begin{equation*}
  (L_3\otimes L_2\otimes L_1)\boldsymbol w =
  \mathrm{vec}(\boldsymbol W\times_1 L_1\times_2 L_2\times_3 L_3).
\end{equation*}
Here, $L_\mu$ are matrices
of size $n\times n$, and the concatenation of $\mu$-mode products is
known as the \emph{Tucker operator}.
For more details on the $\mu$-mode
framework and its implementation for $d=3$ by highly efficient
level 3 BLAS, we invite the reader
to consult, e.g., \cite{CCZ23bis}.
Concerning the evaluation of the needed actions of matrix functions, we have
\begin{equation*}
  \varphi_1(\tau M_3)=
  \varphi_1(\tau (A_\phi\otimes I\otimes D_\rho))=
  (V_\phi\otimes I\otimes I)\varphi_1(\tau(\Lambda_\phi\otimes I\otimes
  D_\rho))(V_\phi^{-1}\otimes I\otimes I),
\end{equation*}
where we used $A_\phi =V_\phi\Lambda_\phi V_\phi^{-1}$.
Hence, the action $\varphi_1(\tau M_3)\boldsymbol w$ is realized as
\begin{equation*}
\big(\boldsymbol \Phi_{\rho,\cdot,\phi}
\circ (\boldsymbol W\times_3 V_\phi^{-1})\big)\times_3 V_\phi,
\end{equation*}
where $\boldsymbol \Phi_{\rho,\cdot,\phi}$ in the order-3 tensor with element
$\varphi_1(\tau\rho_i^{-2}{\lambda_\phi}_k)$ in position $(i,j,k)$,
being ${\lambda_\phi}_k$ the eigenvalues of $A_\phi$.
Then, for $M_2$ we have
\begin{equation*}
  \varphi_1(\tau M_2)=
    \varphi_1(\tau (D_\phi\otimes A_\theta\otimes D_\rho))=
    (I\otimes Q_\theta\otimes I)
    \varphi_1(\tau(D_\phi\otimes \Lambda_\theta\otimes D_\rho))
    (I\otimes Q_\theta^{\sf T}\otimes I)
\end{equation*}
and hence its action on $\boldsymbol w$ is realized by
\begin{equation*}
  \big(\boldsymbol \Phi_{\rho,\theta,\phi}
  \circ (\boldsymbol W\times_2 Q_\theta^{\sf T})\big)\times_2 Q_\theta,
\end{equation*}
where $\boldsymbol \Phi_{\rho,\theta,\phi}$ in the order-3 tensor with element
$\varphi_1(\tau \rho_i^{-2}{\lambda_\theta}_j\sin^{-2} \phi_k)$
in position $(i,j,k)$.
Finally, from the equivalence
\begin{equation*}
  \varphi_1(\tau M_1)
  =\varphi_1(\tau(I\otimes I\otimes A_{\rho,3}))=
I\otimes I\otimes \varphi_1(\tau A_{\rho,3}),
\end{equation*}
the action $\varphi_1(\tau M_1)\boldsymbol w$ is computed as
\begin{equation*}
  \boldsymbol W\times_1 \varphi_1(\tau A_{\rho,3}),
  \end{equation*}
where the matrix $\varphi_1(\tau A_{\rho,3})$
can be computed by diagonalization, since the transformation matrix
of $A_{\rho,3}$ is weakly well conditioned.
Therefore, concatenating the three matrix function actions described above,
the split exponential Euler method for the ball case~\eqref{eq:ball}
is realized in tensor form as
\begin{equation}\label{eq:ee_ball}
  \left\{\begin{aligned}
\boldsymbol F_n&=\boldsymbol W_n\times_1 A_{\rho,3}+
  \boldsymbol W_n\times_1 D_\rho\times_2 A_\theta\times_3 D_\phi+
  \boldsymbol W_n\times_1 D_\rho\times_3 A_\phi+
  \boldsymbol G_n,\\
  \boldsymbol W_{n+1}&=\boldsymbol W_n+\tau
  \Bigg(\bigg(\boldsymbol \Phi_{\rho,\theta,\phi}\circ\Big(
 \big((\boldsymbol \Phi_{\rho,\cdot,\phi}\circ(\boldsymbol F_n\times_3 V_\phi^{-1}))\times_3 V_\phi\big)
  \times_2 Q_\theta^{\sf T}\Big)\bigg)\times_2 Q_\theta\Bigg)
    \times_1 \varphi_1(\tau A_{\rho,3}),
  \end{aligned}\right.
\end{equation}
where we remind that $V_\phi^{-1}=Q_\phi^{\sf T}\Xi^{-1}$
(see the proof of Proposition~\ref{prop:tridiag}).

We conclude by considering the ODEs system
    for the cylinder case  \eqref{eq:cylinder}.
First of all, the action of the matrix $M$ is computed as
\begin{equation*}
\boldsymbol W\times_1 A_{\rho,2}+
  \boldsymbol W\times_1 D_\rho\times_2 A_\theta+
  \boldsymbol W\times_3 A_z.
\end{equation*}
The action $\varphi_1(\tau M_3)\boldsymbol w$ is realized by
\begin{equation*}
\boldsymbol W\times_3 \varphi_1(\tau A_z),
\end{equation*}
 while the action $\varphi_1(\tau M_2)\boldsymbol w$ is computed by
 \begin{equation*}
   \big(\boldsymbol \Phi_{\rho,\theta,\cdot}\circ
   (\boldsymbol W\times_2 Q_\theta^{\sf T})\big)\times_2 Q_\theta.
\end{equation*}
Here $\boldsymbol \Phi_{\rho,\theta,\cdot}$ is the order-3 tensor with element
$\varphi_1(\tau\rho_i^{-2}{\lambda_\theta}_j)$ in position $(i,j,k)$.
The action of $\varphi_1(\tau M_1)\boldsymbol w$ is realized by
\begin{equation*}
\boldsymbol W\times_1 \varphi_1(\tau A_{\rho,2}).
\end{equation*}
Notice that both the matrices $\varphi_1(\tau A_z)$ and $\varphi_1(\tau A_{\rho,2})$
can be conveniently computed by diagonalization, since the relevant
transformation matrices are (weakly) well conditioned.
Hence, the split exponential Euler method for the cylinder
case~\eqref{eq:cylinder} is realized in tensor form by
\begin{equation}\label{eq:ee_cylinder}
\left\{
\begin{aligned}
  \boldsymbol F_n&=\boldsymbol W_n\times_1 A_{\rho,2}+
  \boldsymbol W_n\times_1 D_\rho\times_2 A_\theta+
  \boldsymbol W_n\times_3 A_z+
  \boldsymbol G_n,\\
\boldsymbol W_{n+1}&=\boldsymbol W_n+\tau \bigg(\Big(\boldsymbol \Phi_{\rho,\theta,\cdot}\circ\big((\boldsymbol F_n\times_3 \varphi_1(\tau A_z))
  \times_2 Q_\theta^{\sf T}\big)\Big)\times_2 Q_\theta\bigg)\times_1 \varphi_1(\tau A_{\rho,2}).
  \end{aligned}\right.
\end{equation}
\end{subequations}
 
We highlight that the employment
of the proposed matrix- and tensor-oriented techniques leads to the
direct schemes~\eqref{eq:sexpe}, where
the relevant quantities are computed by high-performance level 3 BLAS.
In particular, no iterative
method (e.g., Krylov subspace methods) is needed for
the approximation of the matrix functions, nor 
we require the choice and/or tuning of input tolerances or hyperparameters.
Hence,
the computational cost of the proposed approach
is constant per time step.
Finally, we stress that the computational burden of the diagonalizations and scalar computations
of the $\varphi_1$ functions is negligible compared to that of the time integration.
In fact, we perform these calculations on small-sized quantities once and for
all before the actual start of the time marching.

\section{Numerical experiments}\label{sec:numer_exp}
In this section, we validate the proposed matrix- and tensor-oriented 
procedures on a variety of two- and three-dimensional examples with different
geometries for the spatial domains.
As already mentioned in the introduction, we consider as test problems
\textit{systems} of {\color{black} $N$-coupled}
diffusion--reaction equations leading to Turing patterns,
that are of great interest for their effective application in many areas.
Despite not being a \textit{single} PDE as
in formula~\eqref{eq:pde}, in these systems the differential operators act
separately on the involved unknowns, and the coupling is realized by the nonlinear
parts. Therefore, the proposed techniques can be easily adapted to handle the
equations under study (see the following section for more insights and, e.g.,
\cite{CC24bis,C24}).

All the numerical experiments are performed on a standard laptop equipped with an
Intel\textsuperscript{\textregistered} Core\textsuperscript{\texttrademark}
i7-10750H chip (six physical cores) and 16GB of RAM. We employ
MathWorks MATLAB\textsuperscript{\textregistered} R2025a as a software.
The {\color{black}open} source code to reproduce all the experiments, fully compatible with
GNU Octave, can be found in a {\color{black} public} GitHub
repository\footnote{Available at \url{https://github.com/cassinif/splitEE_curvilinear}.}.

To realize the required matrix-tensor operations, we use the relevant functions
in KronPACK\footnote{Available at \url{https://github.com/caliarim/KronPACK}.}
(see~\cite{CCZ23bis}).
In addition, the needed eigenpairs are computed using the
built-in MATLAB function \texttt{eig}.
Finally, the initial conditions for the models are small random perturbations
of steady states of the systems (as common in the context of Turing patterns).
To this aim, for reproducibility of the results we fix in each example
the MATLAB random generation seed with the command \texttt{rng(2)}.

\subsection{{\color{black}Comparison of the proposed method with other
    integrators}}\label{sec:orderex}
In the first numerical example we demonstrate the expected convergence of
the split exponential Euler method and highlight the performance of the proposed
technique in comparison to other numerical integrators. To this aim,
we consider
\begin{subequations}\label{eq:BVAM}
\begin{equation}\label{eq:BVAMsys}
  \left\{
    \begin{aligned}
      \partial_t u &= \gamma \Delta_\mathrm{D} u + b(u,v)
              {\color{black}+b_\mathrm{s}}&& \text{on } \Omega,\\
              \partial_t v &= \delta \Delta_\mathrm{D} v + c(u,v)
              {\color{black}+c_\mathrm{s}}&& \text{on } \Omega,\\
    \nabla u\cdot \mathbf{n}&=0, \quad \nabla v\cdot \mathbf{n}=0
    &&\text{on } \partial\Omega,
    \end{aligned}
  \right.
\end{equation}
with
\begin{equation}\label{eq:BVAMnl}
  \begin{aligned}
  b(u,v) &= \alpha_1 u (1-\alpha_2v^2) + v(1-\alpha_3u), \\
  c(u,v) &= \beta_1 v \left( 1+\frac{\alpha_1 \alpha_2}{\beta_1}uv\right)
           + u(\beta_2 + \alpha_3 v),
  \end{aligned}
\end{equation}
\end{subequations}
on a unitary disk (radius $\rho_*=1$){\color{black}, which is
  the so-called BVAM model~\cite{ATGBM02} with the additional sources
  $b_\mathrm{s}=b_\mathrm{s}(t,\rho,\theta)$
  and $c_\mathrm{s}=c_\mathrm{s}(t,\rho,\theta)$ such that the exact solution
  is
  \begin{equation*}
      u=\rme^{-t}(1-\rho)^2\rme^{\frac{\cos\theta}{10}},\quad
      v=\rme^{-t}(1-\rho)^2\rme^{\frac{\sin\theta}{10}}.
    \end{equation*}
The symbol $\Delta_{\mathrm{D}}$ denotes the Laplace operator
on the disk defined in formula~\eqref{eq:lapdisk}. The parameters are set to $\gamma=3.87\cdot10^{-3}$,
$\alpha_1=-\beta_2=8.99\cdot10^{-1}$, $\alpha_2=\alpha_3=2\cdot10^{-1}$,
$\delta=7.5\cdot10^{-3}$, and $\beta_1=-9.1\cdot10^{-1}$.
The initial condition is
\begin{equation*}
  u_0=(1-\rho)^2\rme^{\frac{\cos\theta}{10}},\quad
  v_0=(1-\rho)^2\rme^{\frac{\sin\theta}{10}}.
\end{equation*}%
For the convenience of the reader,} we explicitly write
the system of ODEs arising from the semidiscretization of the coupled
PDEs~\eqref{eq:BVAM}. In fact, in vector form we get
\begin{equation}\label{eq:BVAM_ODE_vec}
  \left\{\begin{aligned}
  \boldsymbol u'(t)&=\gamma(I\otimes A_{\rho,2}+
  A_\theta\otimes D_\rho)  \boldsymbol u(t)+
  \boldsymbol b({\color{black}t},\boldsymbol u(t),\boldsymbol v(t)),\\
  \boldsymbol v'(t)&=\delta(I\otimes A_{\rho,2}+
  A_\theta\otimes D_\rho) \boldsymbol v(t)+
  \boldsymbol c({\color{black}t},\boldsymbol u(t),\boldsymbol v(t)),\\
  \boldsymbol u(0)&=    \boldsymbol u_0,\\
  \boldsymbol v(0)&=  \boldsymbol v_0,
\end{aligned}
\right.
\end{equation}
where the relevant matrices have been introduced in Section~\ref{sec:space_disc}.
Then, the previous system can be written in matrix form as
\begin{equation*}
  \left\{\begin{aligned}
  \boldsymbol U'(t)&=\gamma (A_{\rho,2}\boldsymbol U(t)+
  D_\rho \boldsymbol U(t)A_\theta)+
  \boldsymbol B({\color{black}t},\boldsymbol U(t),\boldsymbol V(t)),\\
  \boldsymbol V'(t)&=\delta( A_{\rho,2}\boldsymbol V(t)+
  D_\rho \boldsymbol V(t)A_\theta)+
  \boldsymbol C({\color{black}t},\boldsymbol U(t),\boldsymbol V(t)),\\
  \boldsymbol U(0)&=  \boldsymbol U_0,\\
  \boldsymbol V(0)&=\boldsymbol V_0,
\end{aligned}
\right.
\end{equation*}
and the split exponential Euler scheme results in
\begin{equation}\label{eq:BVAM_SExpE}
  \left\{\begin{aligned}
  \boldsymbol E_{n}&=\gamma( A_{\rho,2}\boldsymbol U_n+
  D_\rho\boldsymbol U_nA_\theta)+
\boldsymbol B_n,\\
  \boldsymbol U_{n+1}&=\boldsymbol U_{n}+\tau
\varphi_1(\tau\gamma A_{\rho,2})(\boldsymbol \Phi
\circ(\boldsymbol E_{n}Q_\theta))Q_\theta^{\sf T},\\
  \boldsymbol F_{n}&=\delta( A_{\rho,2}\boldsymbol V_n+
  D_\rho\boldsymbol V_nA_\theta)+
\boldsymbol C_n,\\
  \boldsymbol V_{n+1}&=\boldsymbol V_{n}+\tau
\varphi_1(\tau\delta A_{\rho,2})(\boldsymbol \Psi
\circ(\boldsymbol F_{n}Q_\theta))Q_\theta^{\sf T},
\end{aligned}\right.
  \end{equation}
cf.~with formula~\eqref{eq:ee_disk}.
Here the order-2 tensors $\boldsymbol B_n$ and $\boldsymbol C_n$ collect
the values of $\boldsymbol b({\color{black}t_n},\boldsymbol u_n,\boldsymbol v_n)$
and $\boldsymbol c({\color{black}t_n},\boldsymbol u_n,\boldsymbol v_n)$,
respectively, and $\boldsymbol \Phi$ and
$\boldsymbol \Psi$ are the order-2 tensors
with elements $\varphi_1(\tau\gamma\rho_i^{-2}{\lambda_{\theta}}_j)$ and
$\varphi_1(\tau\delta\rho_i^{-2}{\lambda_{\theta}}_j)$, respectively.

We will compare the performance of the proposed scheme against the classical
exponential Euler method~\eqref{eq:ee}
applied to system~\eqref{eq:BVAM_ODE_vec}. In fact,
it can be written as
\begin{equation}\label{eq:BVAM_ExpE}
  \left\{\begin{aligned}
  \boldsymbol e_{n}&=\gamma (I\otimes A_{\rho,2}+
  A_\theta\otimes D_\rho)\boldsymbol u_n+
\boldsymbol b_n,\\
  \boldsymbol u_{n+1}&=\boldsymbol u_{n}+\tau
\varphi_1(\tau\gamma (I\otimes A_{\rho,2}+
A_\theta\otimes D_\rho))\boldsymbol e_{n},\\
  \boldsymbol f_{n}&=\delta (I\otimes A_{\rho,2}+
  A_\theta\otimes D_\rho)\boldsymbol v_n+
\boldsymbol c_n,\\
  \boldsymbol v_{n+1}&=\boldsymbol v_{n}+\tau
\varphi_1(\tau\delta (I\otimes A_{\rho,2}+
A_\theta\otimes D_\rho))\boldsymbol f_{n}.
\end{aligned}\right.
\end{equation}
In practice, the needed actions of $\varphi_1$ functions are realized using
a state-of-the-art Krylov solver with incomplete orthogonalization,
namely
KIOPS\footnote{Available at \url{https://gitlab.com/stephane.gaudreault/kiops}.}~\cite{GRT18}.
Other suitable techniques for the computation of
the action of the $\varphi_1$ function of large and sparse matrices
are described in~\cite{AMH11,CCZ23,LPR19}.
Furthermore, we compare the performance against the classical forward Euler
method applied to system~\eqref{eq:BVAM_ODE_vec}. Despite the well-known
limitations of the scheme in the stiff context, it is widely employed in the
literature for Turing patterns
(see, e.g., \cite{ATGBM02,SMTT23,VAB99}) because of its
simplicity of implementation and overall good performance when the number of
degrees of freedom, and thus the stiffness, is moderate.

We measure the performance of the schemes in terms of wall-clock time
(reported in seconds in the plots) and error at the final time
{\color{black}$t_*=1$}.
For the latter, we consider the 2-norm of the relative {\color{black}$L^2$ norm}
of the
two components of the system. We report in Figure~\ref{fig:plot_order} the
outcome of the simulations for increasing number of time steps $m$ and
degrees of freedom (DOF) $(n_\rho,n_\theta)=(25,50)$ (top plots)
and $(n_\rho,n_\theta)=(40,80)$ (bottom plots).
First of all, notice that all the methods show the expected first-order
convergence.
Then, as anticipated, the forward Euler method has a time step size
restriction due to
the lack of favorable stability properties in the stiff context.
As a matter of fact, it is not able to produce an approximation
in the case with lower DOF unless a sufficiently large number
of time steps is employed. Increasing the DOF (bottom plots) always
results in a divergent behavior for the chosen range of $m$. Also,
when the forward Euler method converges, it has a slightly
higher error compared
to that of the exponential integrators. These downfalls, together with the
computational cost comparable to that of the proposed split exponential Euler
method, allow us to conclude that the forward Euler method should not be
considered as a robust approach in our context.
Considering more in detail the performance of the exponential integrators, we
observe that no time step size restriction is required to guarantee stability.
Moreover, we see that the error of the split
exponential Euler method is comparable to that of exponential Euler. This is
expected, since we are inserting a second-order approximation for the
$\varphi_1$ function in a first-order integrator,
see formula~\eqref{eq:splitorder}.
On the other hand, in terms of computational cost, the advantage of employing
the split version of exponential Euler is clear. In fact, in average it is
17 times faster than the (classical) exponential Euler method. The superiority
of matrix- and tensor-oriented techniques over vector approaches in square and
cubic domains was already
observed, e.g., in~\cite{CC24bis,C24}.
Remark that the split exponential Euler method has a constant cost per time step
(roughly $0.02$ and $0.1$ milliseconds for the top and bottom plots,
respectively)
since it is a \textit{direct} method. Moreover, as mentioned in a previous
section, it has an initialization phase of negligible computational burden
compared to that of the overall time integration. In this phase,
we compute the quantities required by the split scheme
(that is, eigenpairs and $\varphi_1$ functions of scalar arguments), and in fact
it takes at most $0.2$ milliseconds for both experiments.
In summary, all the above considerations
allow us to conclude
that the split exponential Euler method is the preferred method in our
setting. For brevity of exposition, in the following experiments we will
present only the results with this scheme.
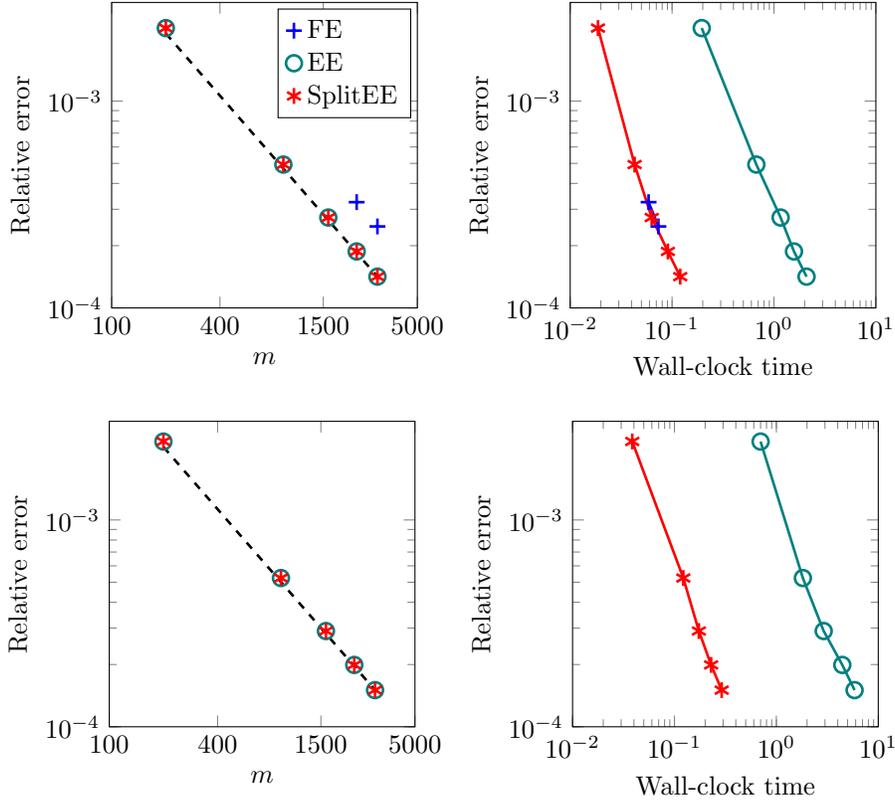
\begin{figure}[htb]
  \centering
%
%
%
\begin{tikzpicture}

\begin{axis}[%
width=1.6in,
height=1.6in,
at={(-0.4in,0in)},
scale only axis,
xmode=log,
xmin=100,
xmax=5000,
xtick={100,400,1500,5000},
xticklabels={100,400,1500,5000},
xminorticks=true,
xlabel={$m$},
ymode=log,
ymin=3e-4,
ymax=1e-2,
yminorticks=true,
ylabel={Relative error},
axis background/.style={fill=white},
legend style={legend cell align=left, align=left}
]
\addplot [color=blue, only marks,line width=1pt, mark size = 3pt, mark=square, mark options={solid, blue}]
  table[row sep=crcr]{%
2300	7.2421e-04\\
3000	5.5611e-04\\
};
\addlegendentry{FE}

\addplot [color=teal, only marks,line width=1pt, mark size = 3pt, mark=o, mark options={solid, teal}]
  table[row sep=crcr]{%
200	7.9801e-03\\
900	1.7752e-03\\
1600	9.9944e-04\\
2300	6.9630e-04\\
3000	5.3496e-04\\
};
\addlegendentry{EE}

\addplot [color=red, only marks,line width=1pt, mark size = 3pt, mark=asterisk, mark options={solid, red}]
  table[row sep=crcr]{%
200	7.9801e-03\\
900	1.7752e-03\\
1600	9.9944e-04\\
2300	6.9630e-04\\
3000	5.3496e-04\\
};
\addlegendentry{SplitEE}

\addplot [color=black,line width=1pt, dashed, forget plot]
  table[row sep=crcr]{%
200	7.9801e-03\\
3000	5.3496e-04\\
};
\end{axis}

\begin{axis}[%
width=1.6in,
height=1.6in,
at={(2in,0in)},
scale only axis,
xmode=log,
xmin=0.01,
xmax=10,
xminorticks=true,
xlabel={Wall-clock time},
ymode=log,
ymin=3e-4,
ymax=1e-2,
yminorticks=true,
ylabel={Relative error},
axis background/.style={fill=white},
legend style={legend cell align=left, align=left}
]
\addplot [color=blue, mark=square,line width=1pt, mark size = 3pt, mark options={solid, blue}]
  table[row sep=crcr]{%
0.079798	7.2421e-04\\
0.10631	5.5611e-04\\
};

\addplot [color=teal, mark=o,line width=1pt, mark size = 3pt, mark options={solid, teal}]
  table[row sep=crcr]{%
0.195493	7.9801e-03\\
0.671324	1.7752e-03\\
1.161818	9.9944e-04\\
1.572234	6.9630e-04\\
2.087686	5.3496e-04\\
};

\addplot [color=red, mark=asterisk,line width=1pt, mark size = 3pt, mark options={solid, red}]
  table[row sep=crcr]{%
0.018746	7.9801e-03\\
0.042881	1.7752e-03\\
0.063145	9.9944e-04\\
0.090922	6.9630e-04\\
0.120168	5.3496e-04\\
};

\end{axis}

\end{tikzpicture}%
\\[3ex]
%
%
%
\begin{tikzpicture}

\begin{axis}[%
width=1.6in,
height=1.6in,
at={(-0.425in,0in)},
scale only axis,
xmode=log,
xmin=100,
xmax=5000,
xtick={100,400,1500,5000},
xticklabels={100,400,1500,5000},
xminorticks=true,
xlabel={$m$},
ymode=log,
ymin=3e-4,
ymax=1e-2,
yminorticks=true,
ylabel={Relative error},
axis background/.style={fill=white},
legend style={legend cell align=left, align=left}
]

\addplot [color=teal, only marks,line width=1pt, mark size = 3pt, mark=o, mark options={solid, teal}]
  table[row sep=crcr]{%
200	7.9290e-03\\
900	1.7734e-03\\
1600	9.9771e-04\\
2300	6.9419e-04\\
3000	5.3235e-04\\
};

\addplot [color=red, only marks,line width=1pt, mark size = 3pt, mark=asterisk, mark options={solid, red}]
  table[row sep=crcr]{%
200	7.9741e-03\\
900	1.7734e-03\\
1600	9.9771e-04\\
2300	6.9419e-04\\
3000	5.3235e-04\\
};

\addplot [color=black,line width=1pt, dashed, forget plot]
  table[row sep=crcr]{%
200	7.9741e-03\\
3000	5.3235e-04\\
};
\end{axis}

\begin{axis}[%
width=1.6in,
height=1.6in,
at={(2in,0in)},
scale only axis,
xmode=log,
xmin=0.01,
xmax=10,
xminorticks=true,
xlabel={Wall-clock time},
ymode=log,
ymin=3e-4,
ymax=1e-2,
yminorticks=true,
ylabel={Relative error},
axis background/.style={fill=white},
legend style={legend cell align=left, align=left}
]

\addplot [color=teal, mark=o,line width=1pt, mark size = 3pt, mark options={solid, teal}]
  table[row sep=crcr]{%
0.70022	7.9290e-03\\
1.82061	1.7734e-03\\
2.91355	9.9771e-04\\
4.43852	6.9419e-04\\
5.85585	5.3235e-04\\
};

\addplot [color=red, mark=asterisk,line width=1pt, mark size = 3pt, mark options={solid, red}]
  table[row sep=crcr]{%
0.038543	7.9741e-03\\
0.122089	1.7734e-03\\
0.173542	9.9771e-04\\
0.228798	6.9419e-04\\
0.290617	5.3235e-04\\
};

\end{axis}

\end{tikzpicture}%
  \caption{Results of the experiment in Section~\ref{sec:orderex} for varying number
  of time steps $m$. The DOF in the top plots are
  $(n_\rho,n_\theta)=(25,50)$, while in the bottom plots are
  $(n_\rho,n_\theta)=(40,80)$.
  The reference dashed line has slope $-1$. In the legend, FE denotes the 
  forward Euler method, EE the exponential Euler scheme~\eqref{eq:BVAM_ExpE},
  and SplitEE the proposed split
  exponential Euler integrator~\eqref{eq:BVAM_SExpE}.}
  \label{fig:plot_order}
\end{figure}

\subsection{BVAM model on the disk}\label{sec:disk}
We now consider {\color{black} the BVAM model, namely
system~\eqref{eq:BVAM} without the source terms $b_\mathrm{s}$ and
$c_\mathrm{s}$,
taking the final time $t_* = 800$, see~\cite{ATGBM02}}.
The choice of the parameters triggers Turing instability of the
equilibrium $(u_\mathrm{e},v_\mathrm{e})=(0,0)$, eventually leading to a static
but spatially inhomogeneous state.
{\color{black} The initial condition is
\begin{equation*}
  u_0 = 10^{-2}\cdot\mathcal{N}(0,1), \quad v_0 = 10^{-2}\cdot\mathcal{N}(0,1).
\end{equation*}
Here and in the following $\mathcal{N}$ denotes the normal random variable.}
The DOF per direction
are set to $(n_\rho,n_\theta)=(40,80)$, and the time-marching integrator
is again~\eqref{eq:BVAM_SExpE} with {\color{black}$m=5000$} time steps.

The outcome of the simulation is presented in
Figure~\ref{fig:plot_turing_disk_BVAM}. The results are
in perfect agreement with what already reported in the literature
(see~\cite[Fig.~2(b)]{ATGBM02}), namely the resulting pattern in
the $u$ component is a spot pattern with symmetry. The wall-clock time of the
simulation is approximately {\color{black}0.5 seconds}. We also illustrate
here (and in the following experiments)
the evolution
of {\color{black}a discretization of the integral mean of $u$
\begin{equation*}
 \frac{1}{\lvert \Omega \rvert}\int_\Omega u(t_n,x,y)dxdy,
\end{equation*}
and of the time increment of the solution in the
      $L^2$ norm
      \begin{equation*}
        \left(\int_{\Omega}(u(t_{n+1},x,y)-u(t_n,x,y))^2dx dy\right)^{\frac{1}{2}}
      \end{equation*}
as indicators} of the system reaching the desired state. As expected,
after the initial reactivity phase, the integral mean stabilizes and remains constant,
{\color{black}while the increment decreases to a value a few orders of magnitude
 less than the peak.}
\begin{figure}[htb]
  \centering
  \includegraphics[scale=0.165]{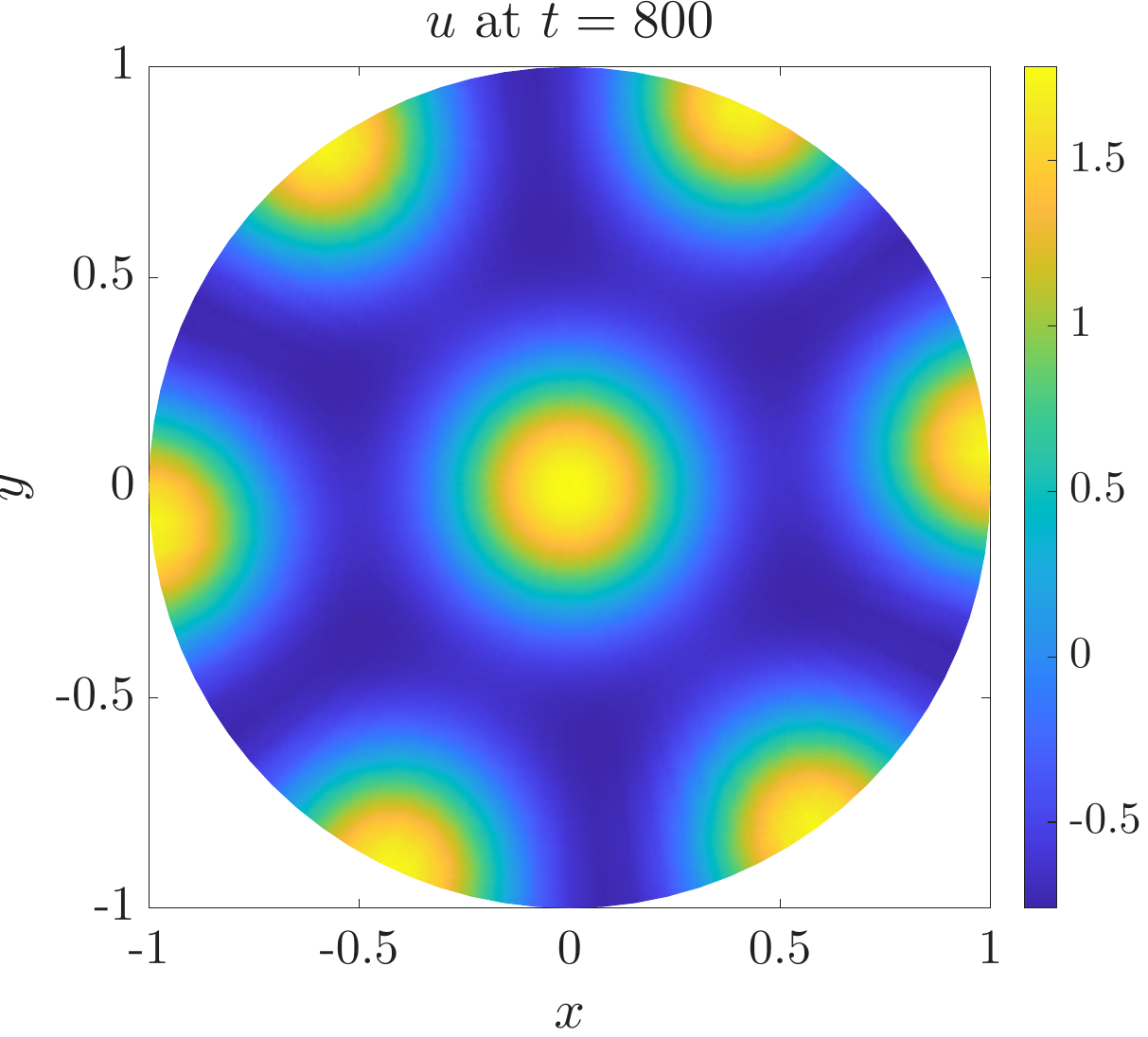}
  \includegraphics[scale=0.165]{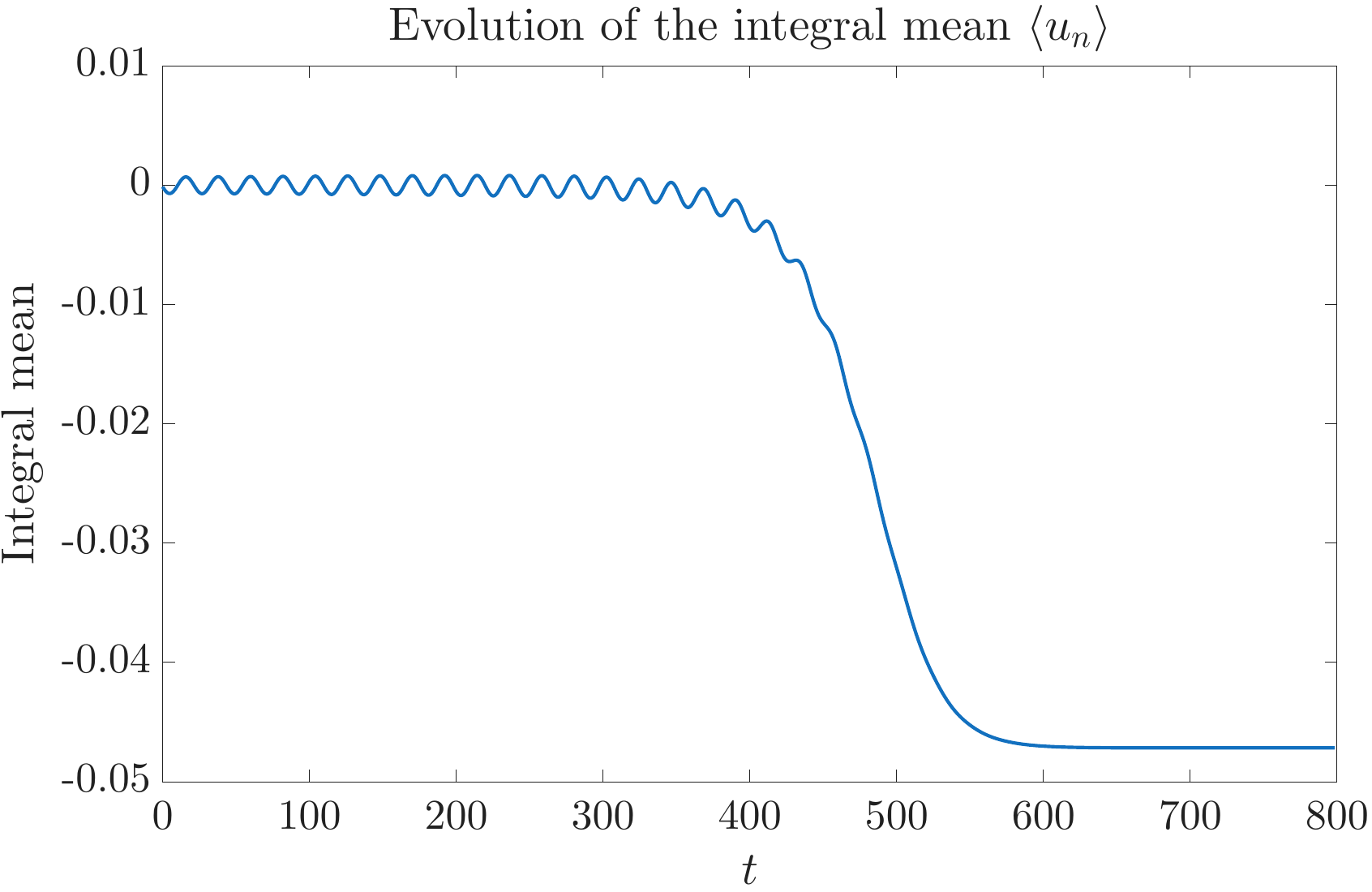}
  \includegraphics[scale=0.165]{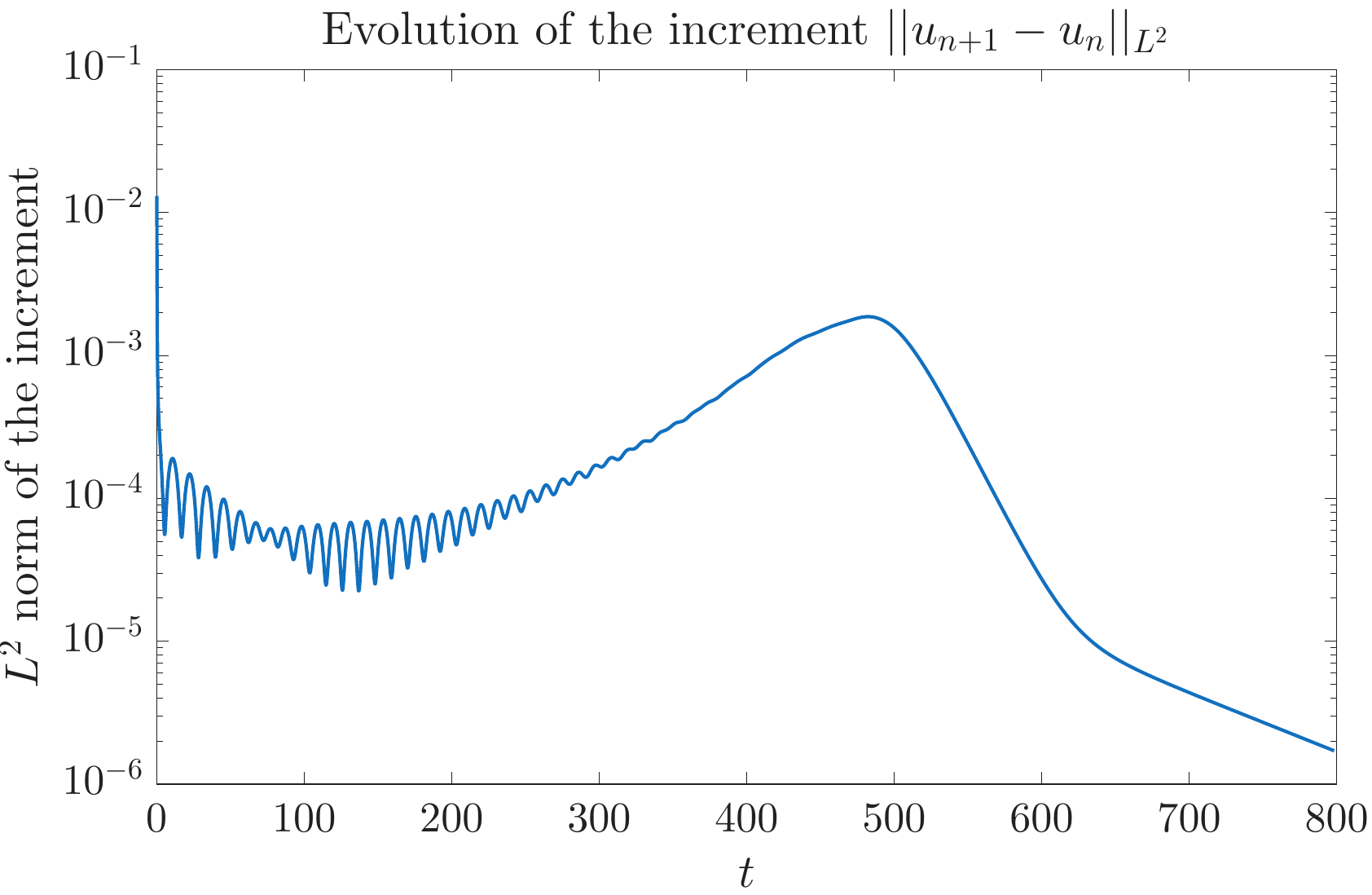}
  \caption{Results of the experiment in Section~\ref{sec:disk} using
  integrator~\eqref{eq:BVAM_SExpE}. The DOF per
  direction are $(n_\rho,n_\theta)=(40,80)$ (total number of DOF $2\cdot 3200$),
  and the number of time steps is {\color{black}$m=5000$. Left: $u$ component
    at the final time. Center: evolution of the integral mean.
    Right: evolution of the increment.
  The total simulation wall-clock time is about 0.5  seconds.}}
  \label{fig:plot_turing_disk_BVAM}
\end{figure}

\subsection{Schnakenberg model with anomalous diffusion on the disk}\label{sec:anomalous}
We now consider a different coupled system, namely
the following \textit{anomalous} diffusion--reaction model with Schnakenberg
kinetics~\cite{LHAMMHH24}
\begin{subequations}\label{eq:schnak}
\begin{equation}\label{eq:schnaksys}
  \left\{
    \begin{aligned}
    \partial_t u &= \Delta_\lambda u + \alpha_1 b(u,v) && \text{on } \Omega,\\
    \partial_t v &= \delta \Delta_\lambda v + \alpha_1 c(u,v)
    && \text{on } \Omega,\\
    u&={\color{black} \alpha_2+\beta_1}, &&\text{on } \partial\Omega,\\
    v&={\color{black}\frac{\beta_1}{(\alpha_2+\beta_1)^2}}
    &&\text{on } \partial\Omega,
    \end{aligned}
  \right.
\end{equation}
where
\begin{equation}\label{eq:schnaknl}
  \begin{aligned}
  b(u,v) &= \alpha_2 - u + u^2v, \\
  c(u,v) &= \beta_1 - u^2v.
  \end{aligned}
\end{equation}
\end{subequations}
The spatial domain $\Omega$ is the unitary disk (that is, the radius is 
$\rho_*=1$).
The anomalous diffusion operator $\Delta_\lambda$ is defined by
\begin{equation}\label{eq:anomalous}
  \Delta_\lambda = \left(
  \frac{1-\lambda}{\rho^{1+\lambda}}\frac{\partial}{\partial \rho}+
  \frac{1}{\rho^\lambda}\frac{\partial^2}{\partial \rho^2}\right)
  +\frac{1}{\rho^{2+\lambda}}\frac{\partial^2}{\partial \theta^2},
\end{equation}
see~\cite{LHAMMHH24,HHHNLHC17}.
Positive values of $\lambda$ describe subdiffusion models, while negative
ones superdiffusion.
Clearly, $\lambda=0$ gives the classical Laplace operator in polar coordinates
$\Delta_\mathrm{D}$, see formula~\eqref{eq:lapdisk}.
The inhomogeneous boundary values for $u$ and $v$ are set to
$(u_\mathrm{e},v_\mathrm{e})=(\alpha_2+\beta_1,\beta_1/(\alpha_2+\beta_1)^2)$,
which corresponds to the spatially uniform steady state of the system.
The remaining parameters are fixed to $\alpha_1=5\cdot10^{2}$, $\alpha_2=1.4\cdot10^{-1}$,
$\delta=5\cdot10^{1}$, and $\beta_1=1.34$. We perform our simulation in a 
superdiffusion regime, and in particular we choose the parameter
$\lambda=-1.95$.
The initial condition is set to
\begin{equation*}
  u_0 = u_\mathrm{e}+{\color{black}10^{-5}\cdot\mathcal{U}(0,1)}, \quad
  v_0 = v_\mathrm{e}+{\color{black}10^{-5}\cdot\mathcal{U}(0,1)}.
\end{equation*}
    {\color{black}Here and in the following
      $\mathcal{U}$ denotes the uniform random variable.}
We discretize the operator~\eqref{eq:anomalous} following a similar approach
to that described in Section~\ref{sec:space_disc} for the disk. In particular, after
lifting the system so that we have a model with homogeneous Dirichlet boundary
conditions, we end up with a computational grid in the $\rho$ coordinate and
corresponding tridiagonal discretization
matrix~\eqref{eq:tridiag} as
\begin{equation}\label{eq:A_lambda}
  \begin{aligned}
    \rho_i&=(1-\lambda)\frac{h_\rho}{2}+(i-1)h_\rho,
    \quad h_\rho=\frac{\rho_*}{n+\frac{1-\lambda}{2}},\\
    a_\ell&=-\frac{2}{(\frac{1-\lambda}{2}+\ell-1)^\lambda h_\rho^{2+\lambda}},\quad
  b_\ell=\frac{1-\lambda+\ell-1}{(\frac{1-\lambda}{2}+\ell-1)^{1+\lambda}h_\rho^{2+\lambda}},\quad
  c_\ell=\frac{\ell}{(\frac{1-\lambda}{2}+\ell)^{1+\lambda}h_\rho^{2+\lambda}}.
\end{aligned}
\end{equation}
Notice that also in this anomalous diffusion case we obtain an essentially
nonnegative matrix with negative eigenvalues.
For ease of reading, the estimate on the condition number of the related
transformation matrix is reported in Appendix~\ref{sec:A_lambda}.
Clearly, the discretization matrix for the $\theta$ coordinate is
still~\eqref{eq:A_theta}.
Concerning the time marching, we employ the integrator in
formula~\eqref{eq:BVAM_SExpE}
with suitable adaptations, namely $A_{\rho,2}$ is replaced by the matrix defined above
and $D_\rho$ by the diagonal matrix
with entries $\rho_i^{-2-\lambda}$.
\begin{figure}[htb]
  \centering
  \includegraphics[scale=0.165]{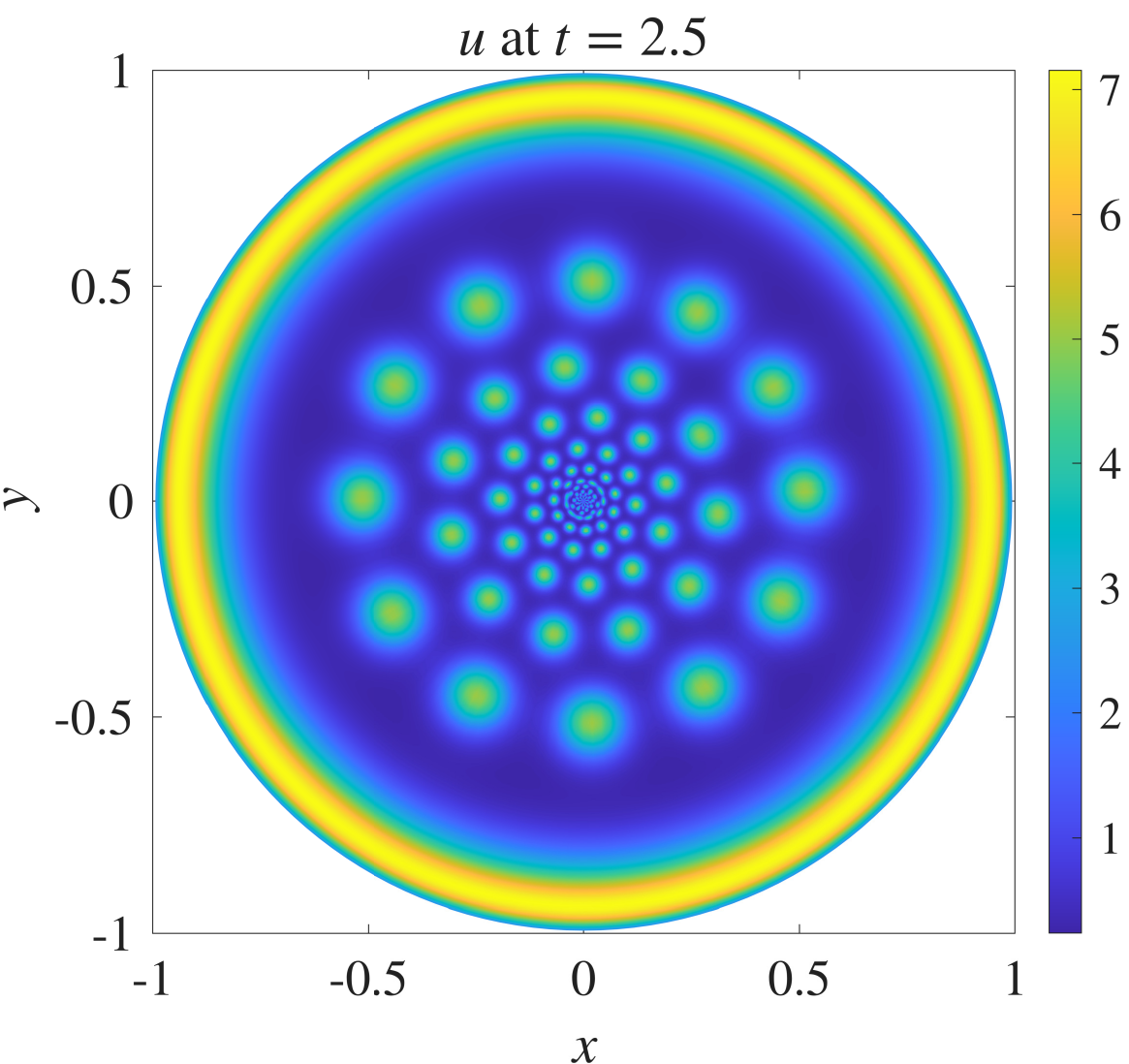}
  \includegraphics[scale=0.165]{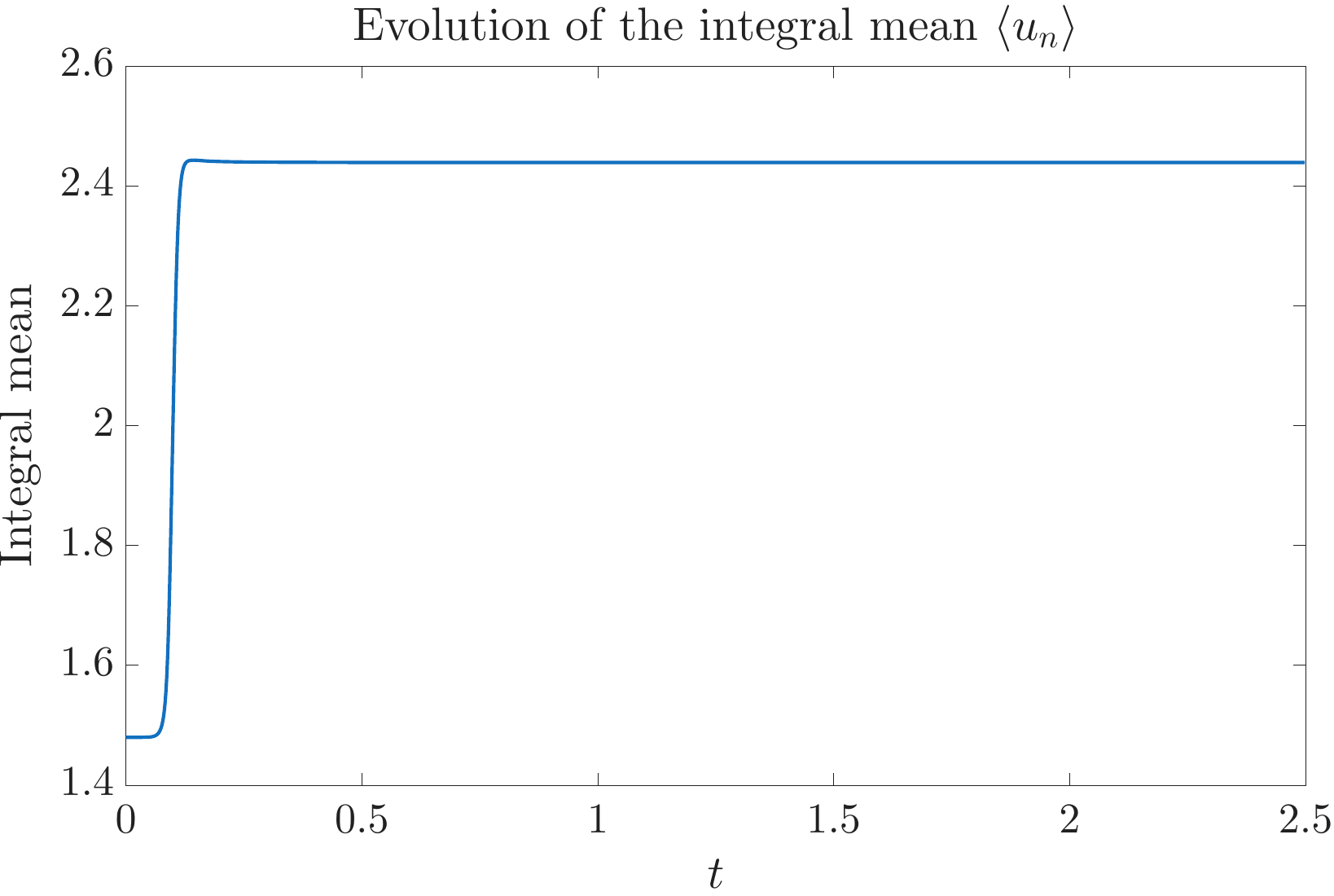}
 \includegraphics[scale=0.165]{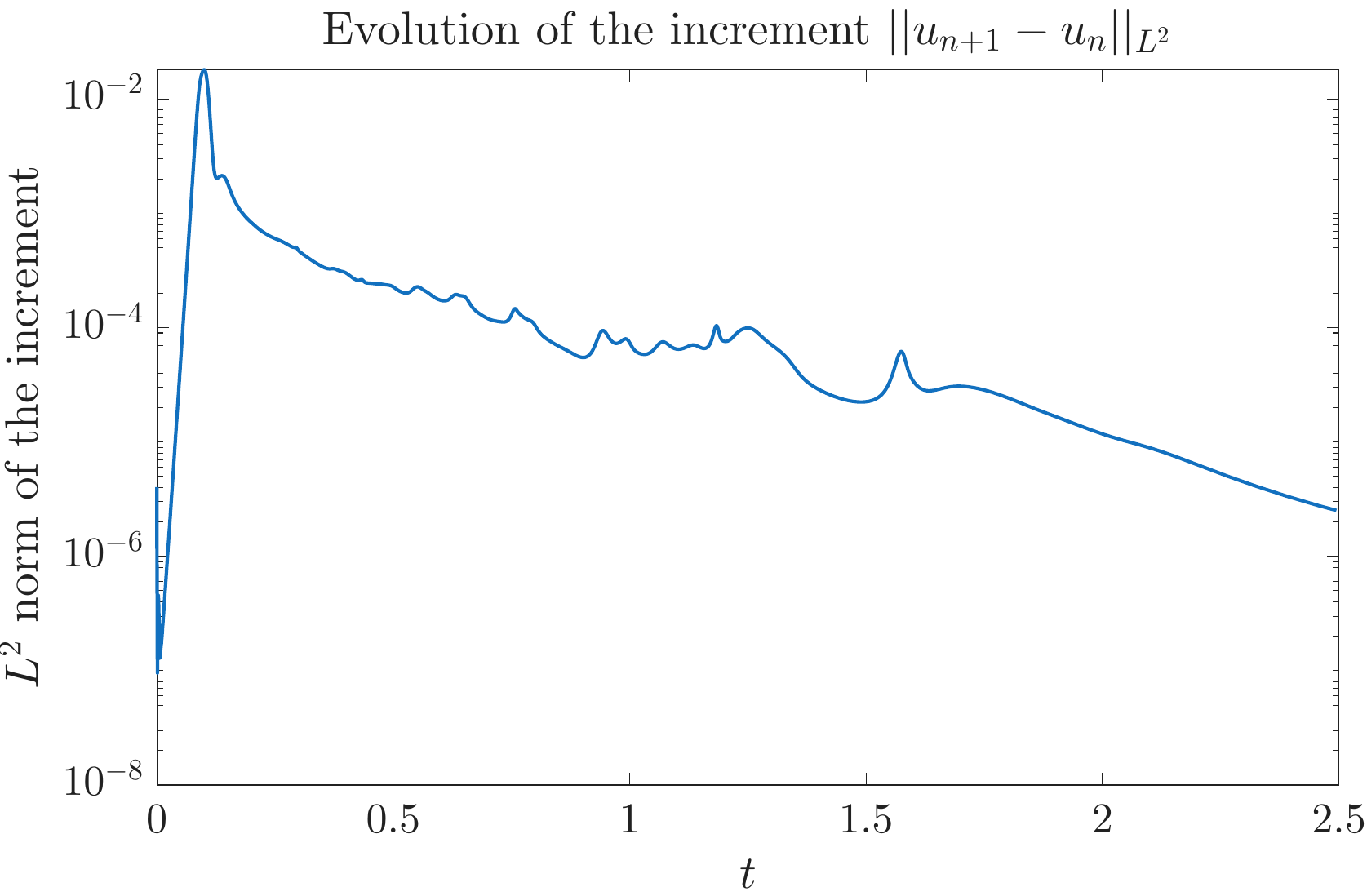}
  \caption{Results of the experiment in Section~\ref{sec:anomalous} using
  a suitable adaptation of integrator~\eqref{eq:BVAM_SExpE}. The DOF per
  direction are $(n_\rho,n_\theta)=(160,160)$ (total number of
  DOF $2\cdot 25600$),
  and the number of time steps is $m=25000$. {\color{black}Left: $u$ component
  at the final time. Center: evolution of the integral mean.
  Right: evolution of the increment.}
  The total simulation wall-clock time is about 19 seconds.}
  \label{fig:plot_turing_disk_schnakenberg_anomalous}
\end{figure}

The results of the simulation at the final time $t_*=2.5$,
setting the DOF per direction
to $(n_\rho,n_\theta)=(160,160)$ and the number of time steps to
$m=25000$,
are depicted in Figure~\ref{fig:plot_turing_disk_schnakenberg_anomalous}.
Also in this case, the outcome perfectly matches with experiments
already available in the literature
(see~\cite[Fig.~9 top right]{LHAMMHH24}) and the integral mean stabilizes
as time proceeds, {\color{black}while the increment decreases}.
The total wall-clock time of this
simulation is approximately 19 seconds.
\subsection{DIB model on the sphere}\label{sec:sphere}
We proceed our experiments by considering the
DIB model on a sphere. 
In particular, the governing equations are~\cite{LBFS17}
\begin{subequations}\label{eq:DIB}
\begin{equation}\label{eq:DIBsys}
  \left\{
    \begin{aligned}
    \partial_t r &= \frac{1}{\rho_*^2}\Delta_\mathrm{S} r + \zeta_1 p(r,s) && \text{on } \Omega, \\
    \partial_t s &= \frac{\varepsilon}{\rho_*^2} \Delta_\mathrm{S} s
    + \zeta_1 q(r,s) && \text{on } \Omega, \\
    \end{aligned}
  \right.
\end{equation}
where
\begin{equation}\label{eq:DIBnl}
  \begin{aligned}
  p(r,s) &=  \zeta_2(1-s)r-\zeta_3 r^3 - \zeta_4 (s-\zeta_5), \\
  q(r,s) &= \eta_1(1+\eta_2r)(1-s)(1-\eta_3(1-s))-\eta_4s(1+\eta_3 s)(1+\eta_5 r),
  \end{aligned}
\end{equation}
\end{subequations}
and $\eta_4 = \eta_1(1-\zeta_5)(1-\eta_3+\eta_3\zeta_5)/(\zeta_5(1+\eta_3\zeta_5))$.
The symbol $\Delta_\mathrm{S}$ denotes the Laplace--Beltrami
operator~\eqref{eq:lapsphere}.
The spatial domain $\Omega$ is a sphere with radius $\rho_*=1.1653$, while
the parameter values are set to $\zeta_1=\zeta_2=10$, $\zeta_3=1$,
{\color{black}$\zeta_4=44$},
$\zeta_5=0.5$, $\eta_1=5$, $\eta_2=2.5$, $\eta_3=0.2$, $\eta_5=1.5$,
and $\varepsilon=20$. The initial condition is a small random perturbation of
the spatially homogeneous equilibrium $(u_\mathrm{e},v_\mathrm{e})=(0,\zeta_5)$,
that is
\begin{equation*}
  u_0 = u_\mathrm{e}+10^{-6}\cdot\mathcal{N}(0,1), \quad
  v_0 = v_\mathrm{e}+10^{-6}\cdot\mathcal{N}(0,1).
\end{equation*}
For the simulation we consider as DOF per direction
$(n_\theta,n_\phi)=(100,50)$ and the time-marching scheme
is the adaptation of integrator~\eqref{eq:ee_sphere} to coupled systems.
The final time is set to {\color{black}$t_*=8$}
and the number of time steps is {\color{black}$m=4000$}.

The outcome is depicted in
Figure~\ref{fig:plot_turing_sphere_DIB}, and it
resembles similar experiments available in the literature
(see, e.g., \cite[Fig.~2 {\color{black}bottom left}]{LBFS17}).
In particular, the choice
of the parameters triggers the Turing instability and the resulting pattern in
the $u$ component are {\color{black}axisymmetric stripes.
  Moreover, the stationary indicators show the expected behaviors.}
The wall-clock time of the
simulation is approximately {\color{black}1.2 seconds}.
\begin{figure}[htb]
  \centering
  \includegraphics[scale=0.155]{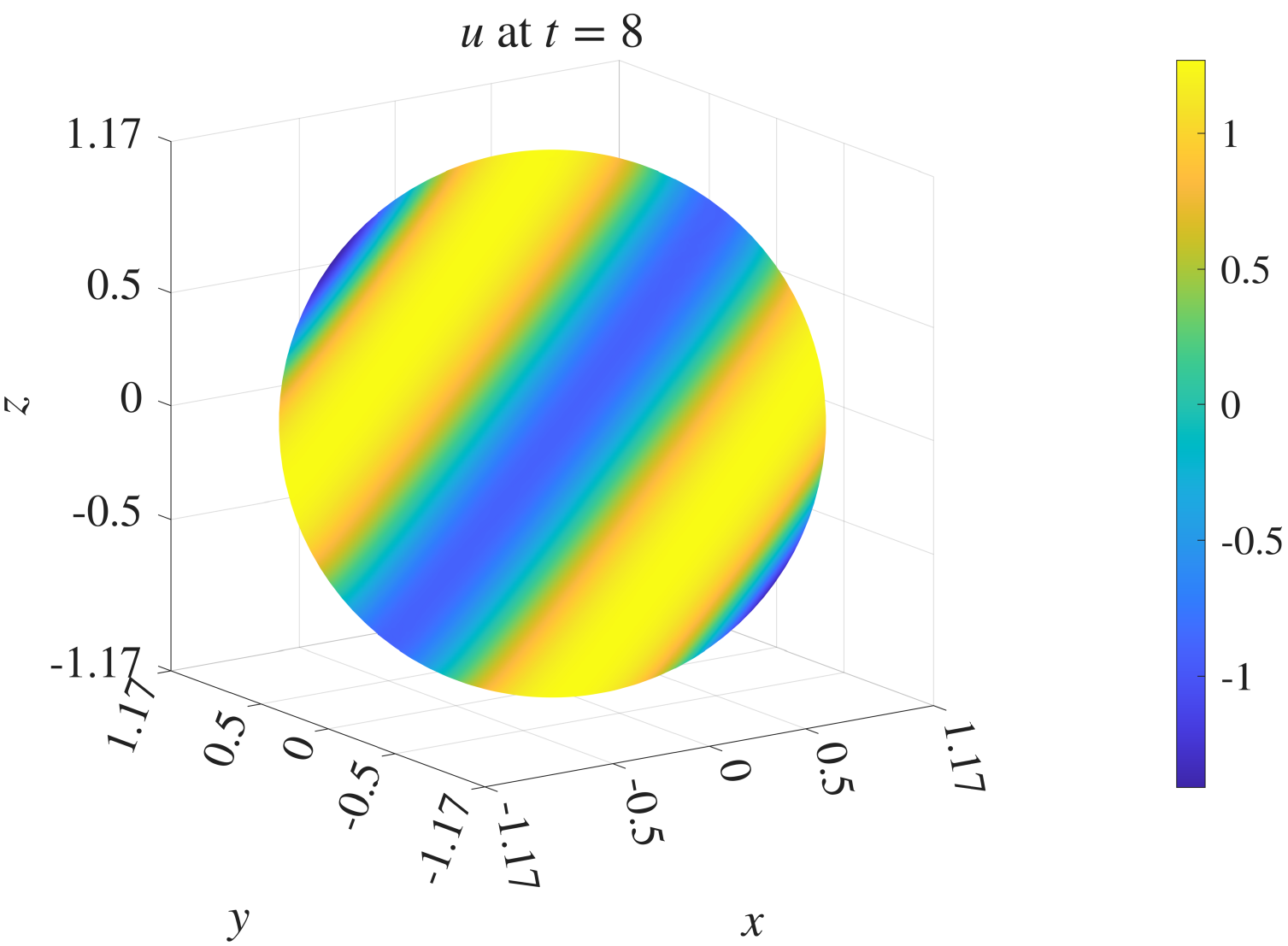}
  \includegraphics[scale=0.155]{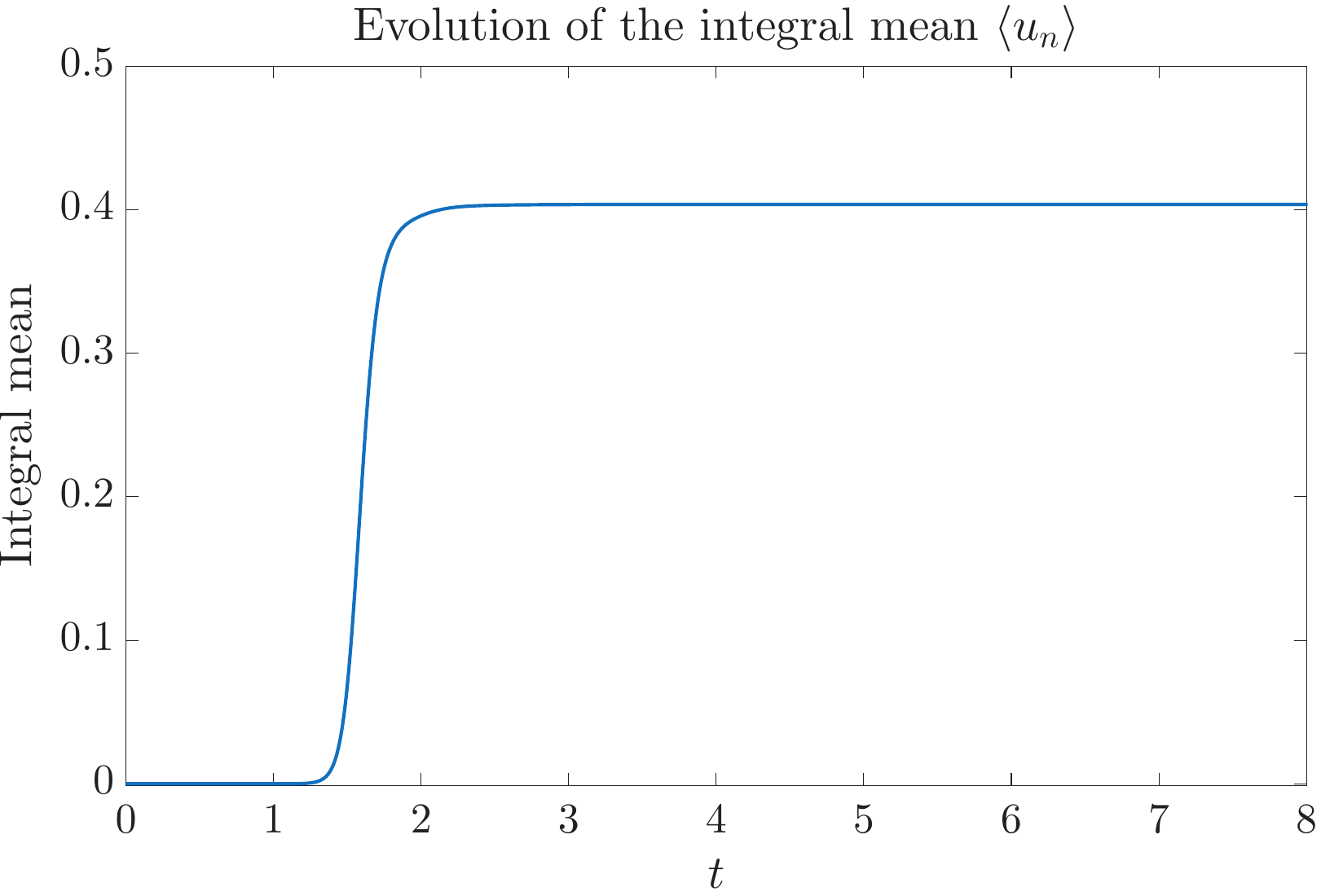}
    \includegraphics[scale=0.155]{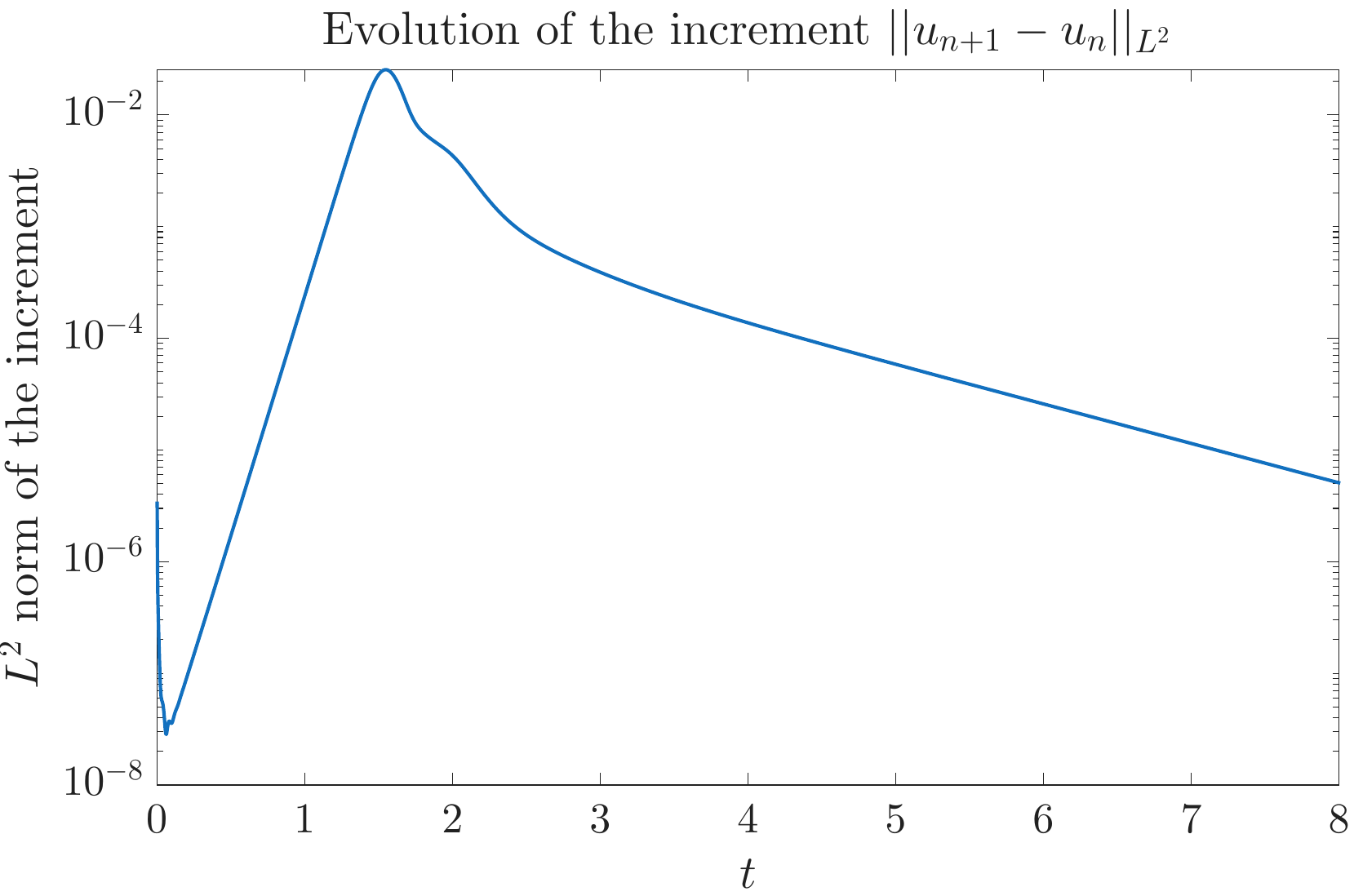}
  \caption{Results of the experiment in Section~\ref{sec:sphere} using
    the adaptation of integrator~\eqref{eq:ee_sphere} to coupled systems.
    The DOF per
    direction are $(n_\theta,n_\phi)=(100,50)$ (total number of
    DOF $2\cdot 5000$),
  and the number of time steps is {\color{black}$m=4000$. Left: $u$ component
    at the final time. Center: evolution of the integral mean. Right:
    evolution of
  the increment.
  The total simulation wall-clock time is about 1.2 seconds.}}
  \label{fig:plot_turing_sphere_DIB}
\end{figure}

\subsection{Bulk-surface Schnakenberg model on the sphere}\label{sec:ball}
We now compute a numerical approximation to the following four-components
bulk-surface Schnakenberg reaction--diffusion system on the sphere~\cite{FMS23}
\begin{subequations}\label{eq:bulkball}
\begin{equation}\label{eq:bulkballsys}
  \left\{
    \begin{aligned}
      \partial_t u &= \Delta_\mathrm{B} u +
      \alpha_1 b(u,v) && \text{on } \Omega, \\
    \partial_t v &= \delta \Delta_\mathrm{B} v
                   + \alpha_1 c(u,v) && \text{on } \Omega, \\
    \partial_t r &= \frac{1}{\rho_*^2}\Delta_\mathrm{S} r + \zeta_1 p(u,v,r,s) && \text{on } \partial\Omega,\\
    \partial_t s &= \frac{\varepsilon}{\rho_*^2} \Delta_\mathrm{S} s + \zeta_1 q(u,v,r,s) && \text{on } \partial\Omega,\\
    \nabla u\cdot\mathbf{n} &= \zeta_1(\zeta_2r-\zeta_3 u),\quad
    \delta\nabla v\cdot\mathbf{n} = \zeta_1(\eta_1s-\eta_2 v) &&
\text{on } \partial\Omega,
    \end{aligned}
  \right.
\end{equation}
where
\begin{equation}\label{eq:bulkballnl}
  \begin{aligned}
  b(u,v) &= \alpha_2 - u + u^2v,&p(u,v,r,s) &=  b(r,s) - (\zeta_2r-\zeta_3u), \\
  c(u,v) &= \beta_1 - u^2v,&  q(u,v,r,s) &= c(r,s) - (\eta_1s-\eta_2v).
  \end{aligned}
\end{equation}
\end{subequations}
The symbols $\Delta_\mathrm{S}$ and $\Delta_\mathrm{B}$ represent the
Laplace--Beltrami and the Laplace operators \eqref{eq:lapsphere} and
\eqref{eq:lapball}, respectively.
In the system, the first two equations are defined on the
spatial domain $\Omega$ which is a unitary ball (that is,
the radius is $\rho_*=1$).
The second and third equations define the dynamics at the boundary
surface (the unitary sphere). Finally, the last two
equations are the needed boundary conditions for $u$ and $v$.
In the model, we set the parameters to
$\alpha_1=\zeta_1=55$, $\alpha_2=0.1$, $\beta_1=0.9$, $\zeta_2=\zeta_3=5/12$,
$\eta_1=\eta_2=5$, and $\delta = \varepsilon = 10$.
The spatially homogeneous equilibrium
\begin{equation*}
(u_\mathrm{e},v_\mathrm{e},r_\mathrm{e},s_\mathrm{e})
=(\alpha_2+\beta_1,\beta_1/(\alpha_2+\beta_1)^2,\alpha_2+\beta_1,\beta_1/(\alpha_2+\beta_1)^2)
\end{equation*}
is subject to Turing instability. In fact, we take as the initial condition
\begin{equation*}
  \begin{aligned}
    u_0 &= u_\mathrm{e}+10^{-3}\cdot\mathcal{N}(0,1), &
    r_0 &= r_\mathrm{e}+10^{-3}\cdot\mathcal{N}(0,1),\\
    v_0 &= v_\mathrm{e}+10^{-3}\cdot\mathcal{N}(0,1), &
    s_0 &= s_\mathrm{e}+10^{-3}\cdot\mathcal{N}(0,1),
  \end{aligned}
\end{equation*}
and integrate the system up to the final time {\color{black}$t_*=15$} with
the adaptation of schemes~\eqref{eq:ee_sphere} and~\eqref{eq:ee_ball}
to coupled systems. Remark that, similarly
to the two-component cases presented previously, the linear
differential operators
act separately on the different components of the system, allowing us to employ
our proposed integrators after spatial semidiscretization (see the discussion
in Section~\ref{sec:orderex}). We omit the details for brevity of exposition.
\begin{figure}[!htb]
  \centering
  \includegraphics[scale=0.165]{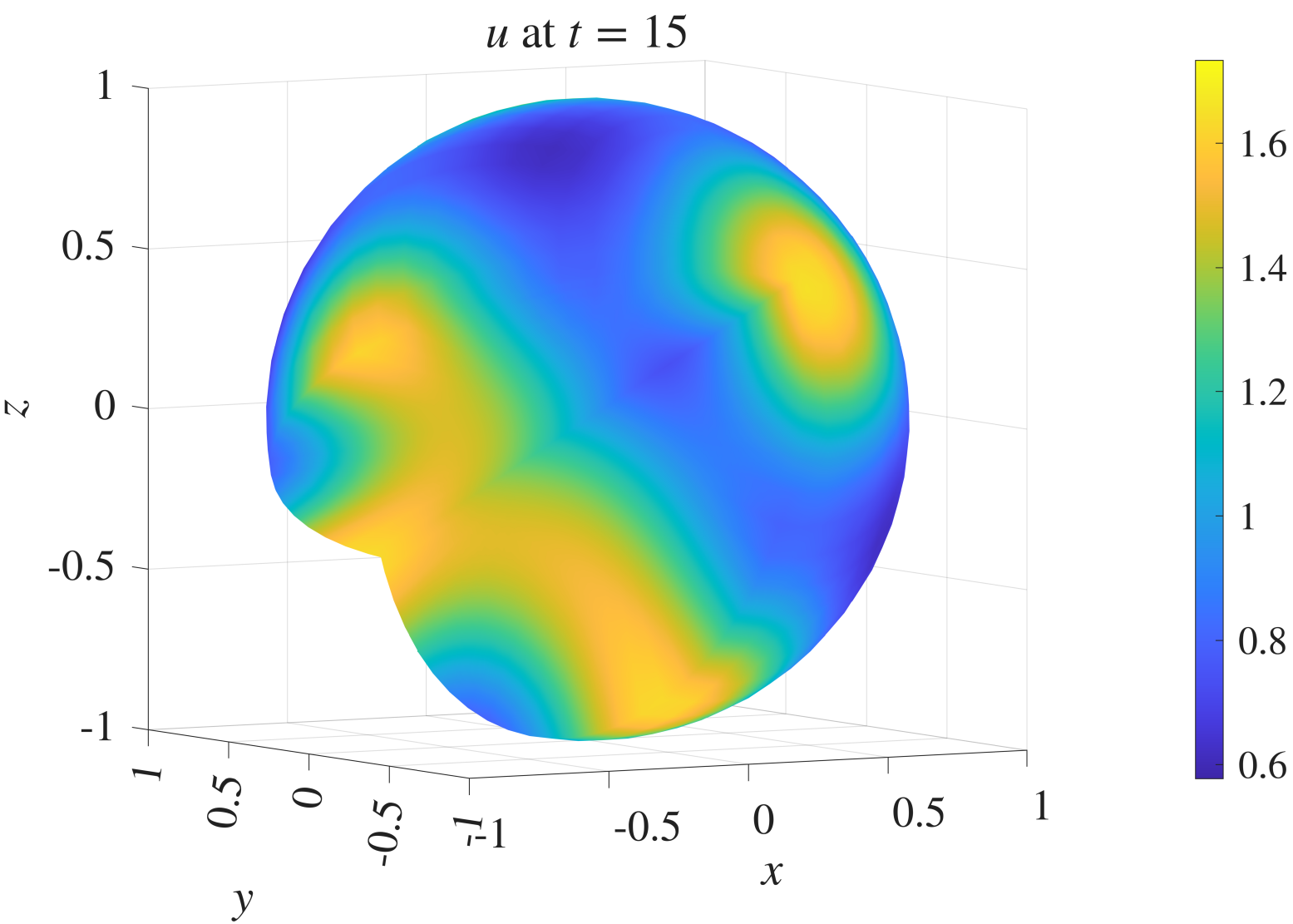}
  \includegraphics[scale=0.165]{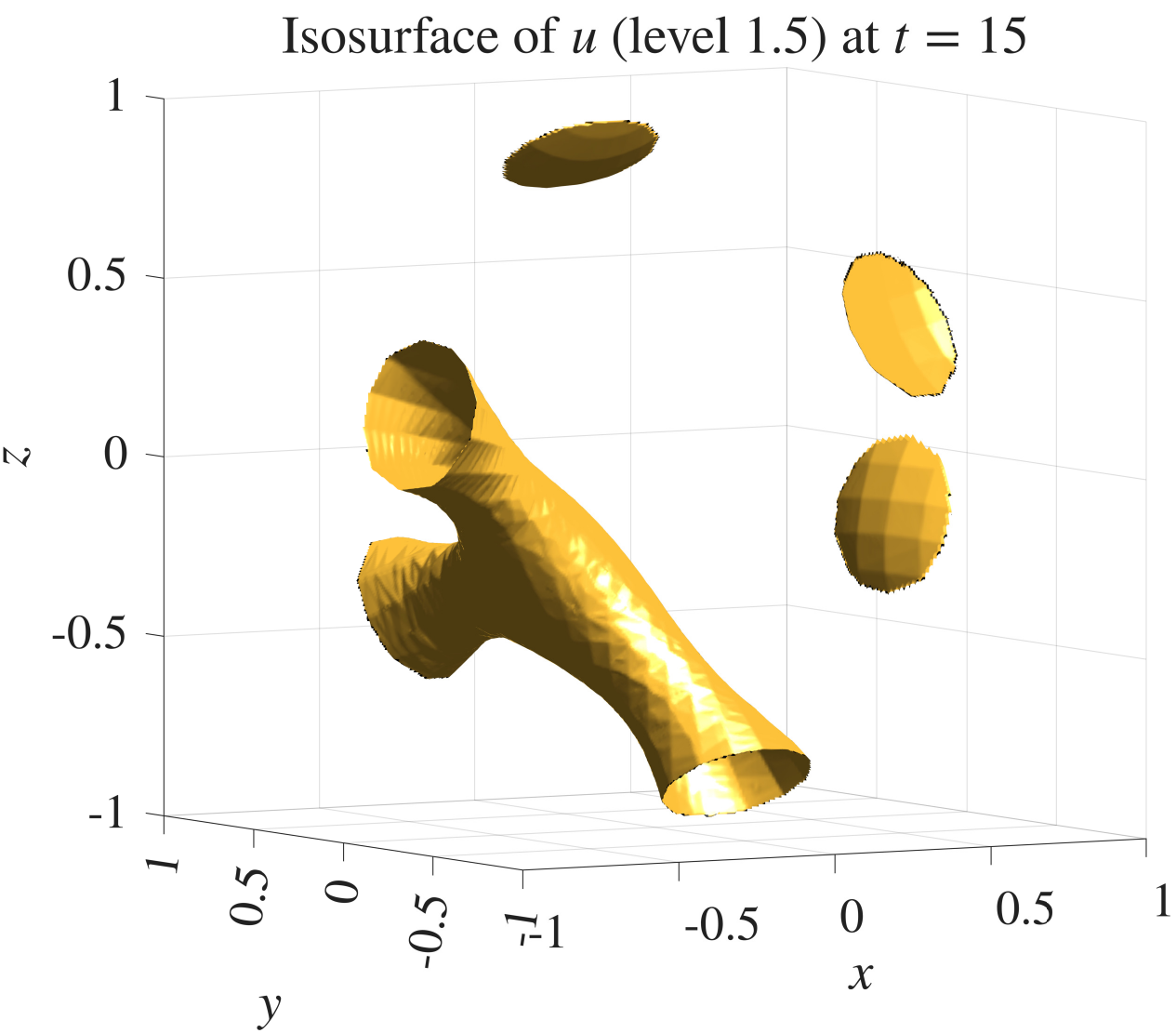}
  \includegraphics[scale=0.165]{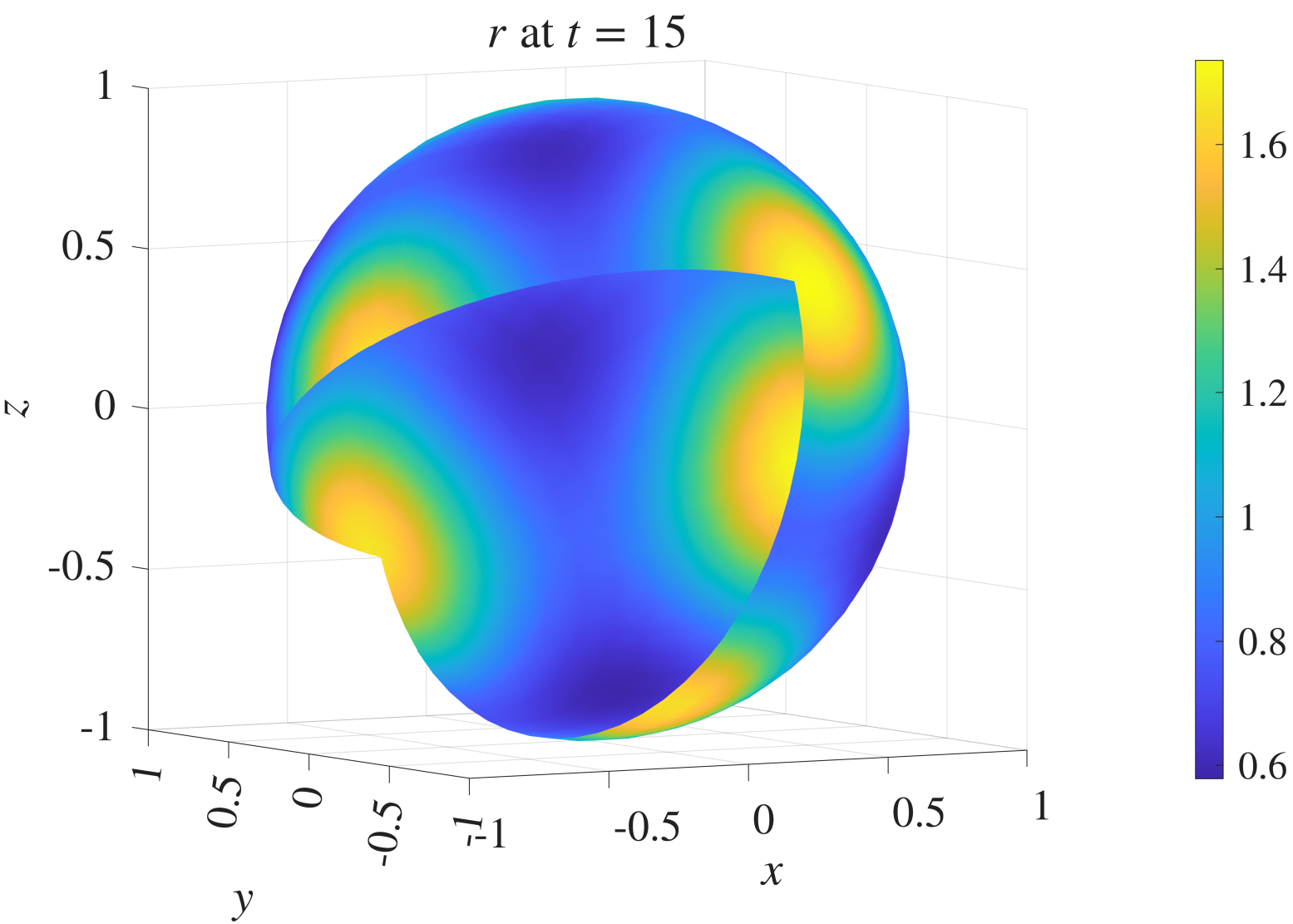}\\
  \includegraphics[scale=0.165]{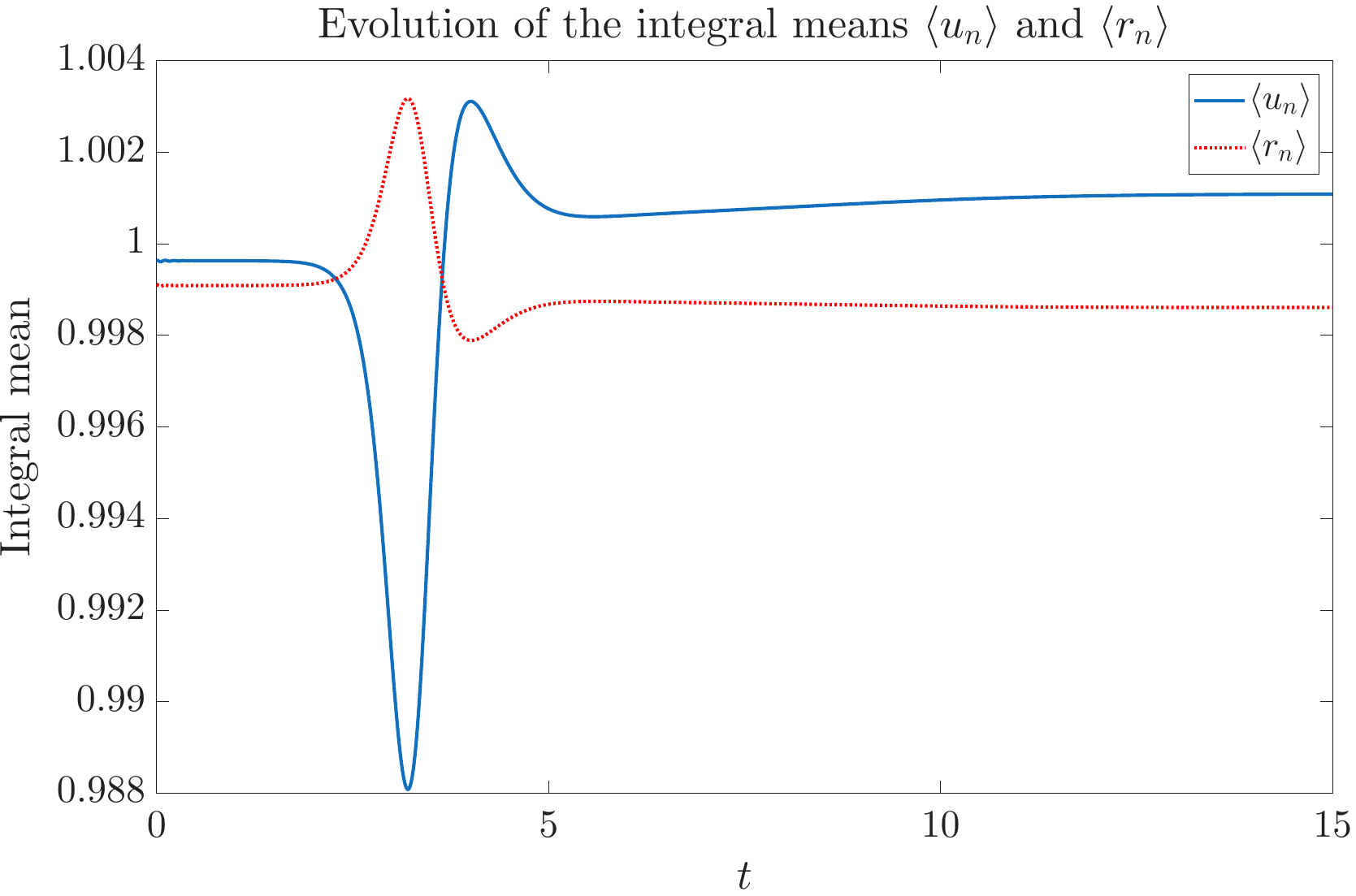}
  \includegraphics[scale=0.165]{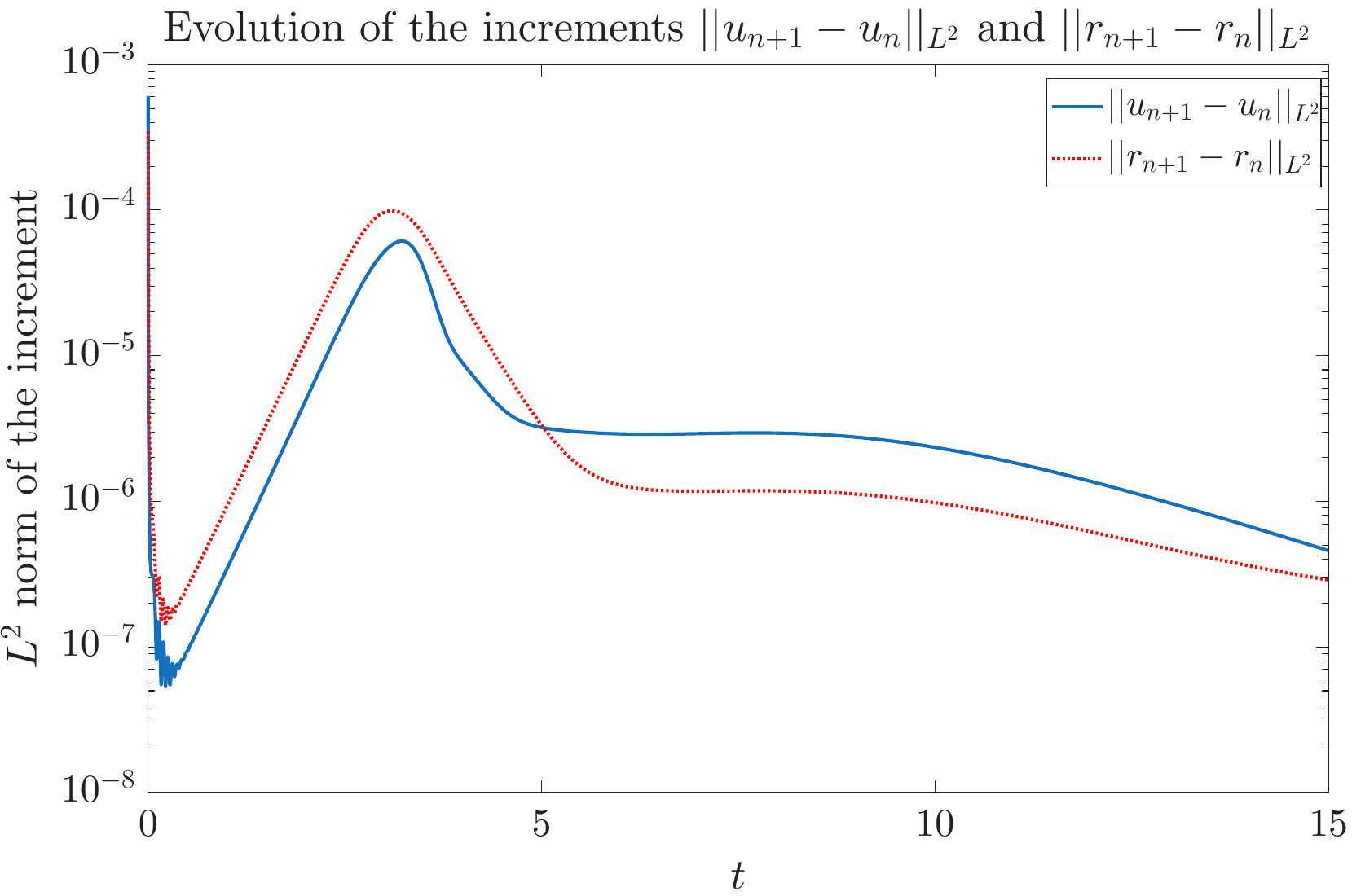}
  \caption{Results of the experiment in Section~\ref{sec:ball} using
    the adaptation of integrators~\eqref{eq:ee_sphere} and~\eqref{eq:ee_ball}
    to coupled systems.
    The DOF per
  direction are $(n_\rho,n_\theta,n_\phi)=(30,50,30)$
  (total number of DOF $2\cdot 45000 + 2\cdot 1500$),
  and the number of time steps is {\color{black}$m=150000$.
  Top: $u$ component (left),
  isosurface at level $1.5$ (center), and $r$ component (right)
  at the final time.
  Bottom: evolution of the integral means (left) and of the increments
  (right).
  The total simulation wall-clock time is about 4.8 minutes.}}
  \label{fig:plot_turing_ball_schnakenberg}
\end{figure}

The results of the simulation setting the DOF per direction
to $(n_\rho,n_\theta,n_\phi)=(30,50,30)$
and the number of time steps to {\color{black}$m=150000$}
are depicted in Figure~\ref{fig:plot_turing_ball_schnakenberg}.
Also in this more challenging three-dimensional case, we are able to
efficiently
obtain the expected pattern using our tensor-based proposed technique
(cf.~\cite[Fig.~3]{FMS23}). In fact,
the split exponential Euler method is able to retrieve the spatially
inhomogeneous
steady state (tunnels connecting three out of six spots of $u$, all
in correspondence
with spots in the $r$ component) in approximately {\color{black}4.8 minutes}.
As expected,
both the integral means of $u$ and $r$ stabilize to constant values
as time proceeds, {\color{black} while the increments decrease.}

\subsection{Bulk-surface DIB model on the cylinder}\label{sec:cylinder}
We conclude our set of experiments with another three-dimensional bulk-surface
model. In particular, we consider the BS-DIB model presented in~\cite{FSB24}
but, instead of choosing
a cube as the spatial domain $\Omega$, we take a cylinder. To this aim, we define
the boundary surface as $\partial\Omega = \Omega_\mathrm{B} \cup \Omega_\mathrm{L} \cup \Omega_\mathrm{T}$
where $\Omega_\mathrm{B}$ is the bottom disk of the cylinder, $\Omega_\mathrm{L}$
is its lateral surface, and $\Omega_\mathrm{T}$ it its top
disk (see Figure~\ref{fig:cylinder} for a graphical representation).
\begin{figure}[!ht]
  \centering
\includegraphics[scale=0.5]{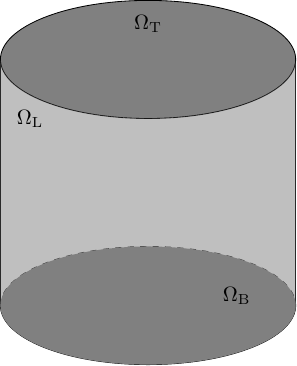}
\caption{Graphical representation of the spatial domain $\Omega$ for
  the experiment in Section~\ref{sec:cylinder}.}
\label{fig:cylinder}
\end{figure}

The governing equations are
\begin{subequations}\label{eq:bulkcyl}
\begin{equation}\label{eq:bulkcylsys}
  \left\{
    \begin{aligned}
    \partial_t u &= \Delta_\mathrm{C} u + b(u) && \text{on } \Omega, \\
    \partial_t v &= \delta \Delta_\mathrm{C} v + c(v) && \text{on } \Omega, \\
    \partial_t r &= \Delta_\mathrm{D} r +
    \zeta_1p(u,r,s) && \text{on } \Omega_\mathrm{B},\\
    \partial_t s &= \varepsilon \Delta_\mathrm{D} s +
    \zeta_1q(v,r,s) && \text{on } \Omega_\mathrm{B},\\
    u &= \alpha_2, \quad v = \beta_2 && \text{on } \Omega_\mathrm{T},\\
    \nabla u\cdot\mathbf{n} &= -\zeta_1\alpha_3p(u,r,s) && \text{on } \Omega_\mathrm{B},\\
    \nabla v\cdot\mathbf{n} &= -\zeta_1\beta_3q(v,r,s) && \text{on } \Omega_\mathrm{B},\\
    \nabla u\cdot\mathbf{n} &= 0, \quad \nabla v\cdot\mathbf{n} = 0 && \text{on } \Omega_\mathrm{L},\\
    \nabla r\cdot \mathbf{n}&=0,\quad \nabla s\cdot \mathbf{n} =0 &&
    \text{on } \partial\Omega_\mathrm{B},
    \end{aligned}
  \right.
\end{equation}
where
\begin{equation}\label{eq:bulkcylnl}
  \begin{aligned}
    b(u)&= -\alpha_1(u-\alpha_2),\\
    c(v)&= -\beta_1(v-\beta_2),\\
    p(u,r,s)&=\zeta_2u(1-s)r-\zeta_3r^3-\zeta_4(s-\zeta_5),\\
    q(v,r,s)&=\eta_1v(1+\eta_2r)(1-s)(1-\eta_3(1-s))-\eta_4(1+\eta_5r)s(1+\eta_3s).
  \end{aligned}
\end{equation}
\end{subequations}
The symbols $\Delta_\mathrm{C}$ and $\Delta_\mathrm{D}$ denote the Laplace
operator on the cylinder \eqref{eq:lapcyl} and the disk \eqref{eq:disk},
respectively.
The radius of the cylinder is set to {\color{black}$\rho_*=15$} and the height is set to
$z_*=25$. The parameters are set to
$\alpha_1=\alpha_2=\delta=\beta_1=\beta_2=\zeta_1=\zeta_3=1$,
{\color{black}$\alpha_3=\beta_3=0.1$},
$\varepsilon=20$, $\zeta_2 = 10$, {\color{black}$\zeta_4 = 30$},
$\zeta_5 = 0.5$, $\eta_1 = 3$,
$\eta_2 = 2.5$, $\eta_3 = 0.2$, $\eta_5 = 1.5$. Finally, the parameter
$\eta_4$ is set to $\beta_2\eta_1(1-\zeta_5)(1-\eta_3+\eta_3\zeta_5)/(\zeta_5(1+\eta_3\zeta_5))$
so that the spatially homogeneous equilibrium
$(u_\mathrm{e},v_\mathrm{e},r_\mathrm{e},s_\mathrm{e})
=(\alpha_2,\beta_2,0,\zeta_5)$
is subject to Turing instability. In fact, we take as initial condition
\begin{equation*}
  u_0 = u_\mathrm{e}, \quad
  v_0 = v_\mathrm{e}, \quad
  r_0 = r_\mathrm{e}+10^{-2}\cdot\mathcal{U}(0,1), \quad
  s_0 = s_\mathrm{e}+10^{-2}\cdot\mathcal{U}(0,1),
\end{equation*}
and integrate the system up to the final time {\color{black}$t_*=250$}
employing the adaptation of schemes~\eqref{eq:ee_disk}
and~\eqref{eq:ee_cylinder} to coupled systems.

The results of the simulation setting the DOF per direction
to {\color{black}$(n_\rho,n_\theta,n_z)=(140,140,25)$}
and the number of time steps to {\color{black}$m=10000$}
are displayed in Figures~\ref{fig:plot_turing_cylinder_DIB_R}
($r$ component {\color{black}and its stationary indicators})
and~\ref{fig:plot_turing_cylinder_DIB_U} ($u$ component).
The obtained pattern {\color{black}results in spots  both in the
  surface component $r$ and in $u$ at $z=0$, as expected}.
Then, analogously to the case
considered in~\cite[{\color{black}Experiment D4}]{FSB24}, the pattern
dissipates as
$z$ increases, eventually
keeping the constant value $\alpha_2=1$.
The overall wall-clock time for this simulation is approximately
{\color{black}4.6 minutes}.
\begin{figure}[!htb]
  \centering
  \includegraphics[scale=0.165]{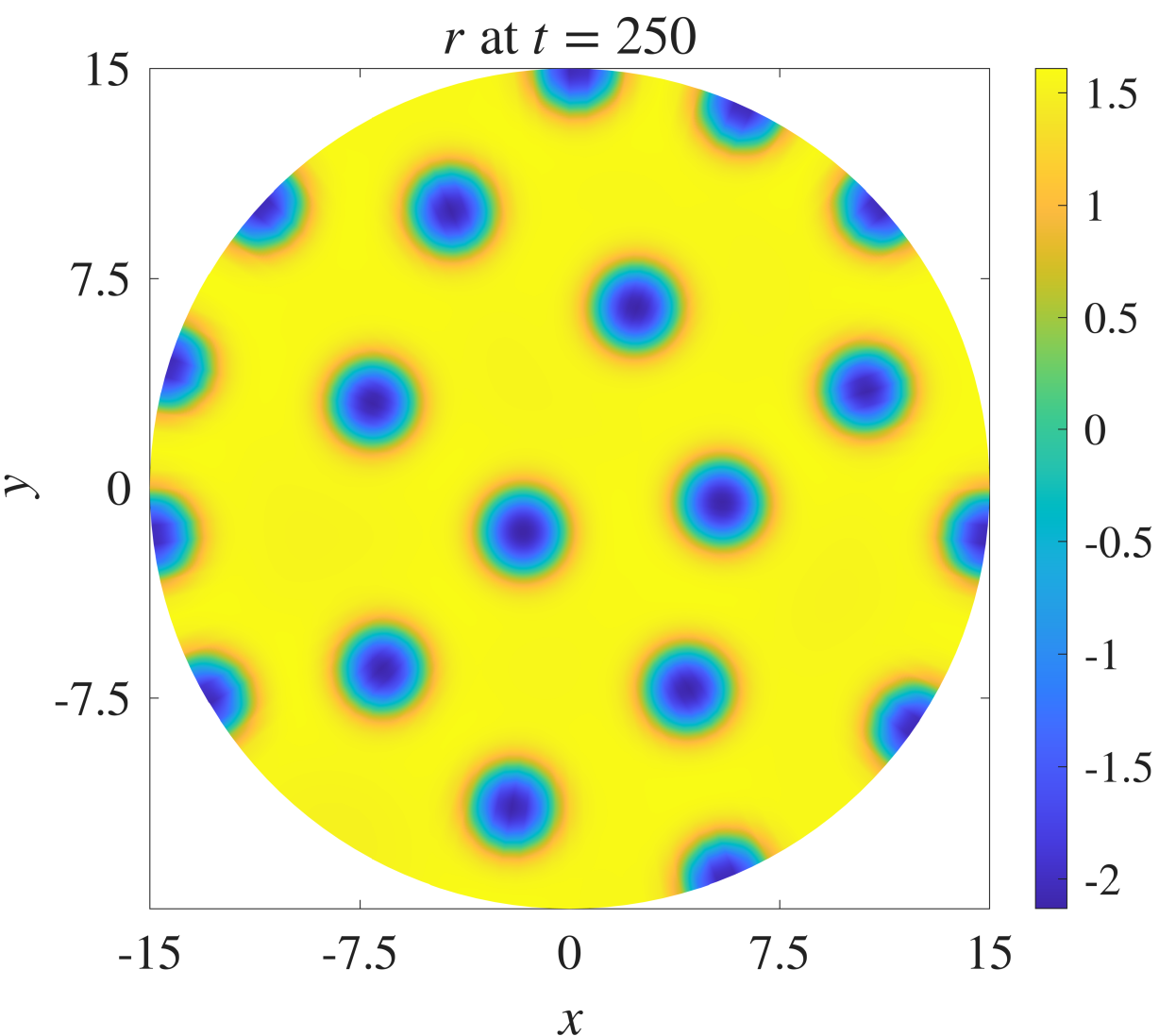}
  \includegraphics[scale=0.165]{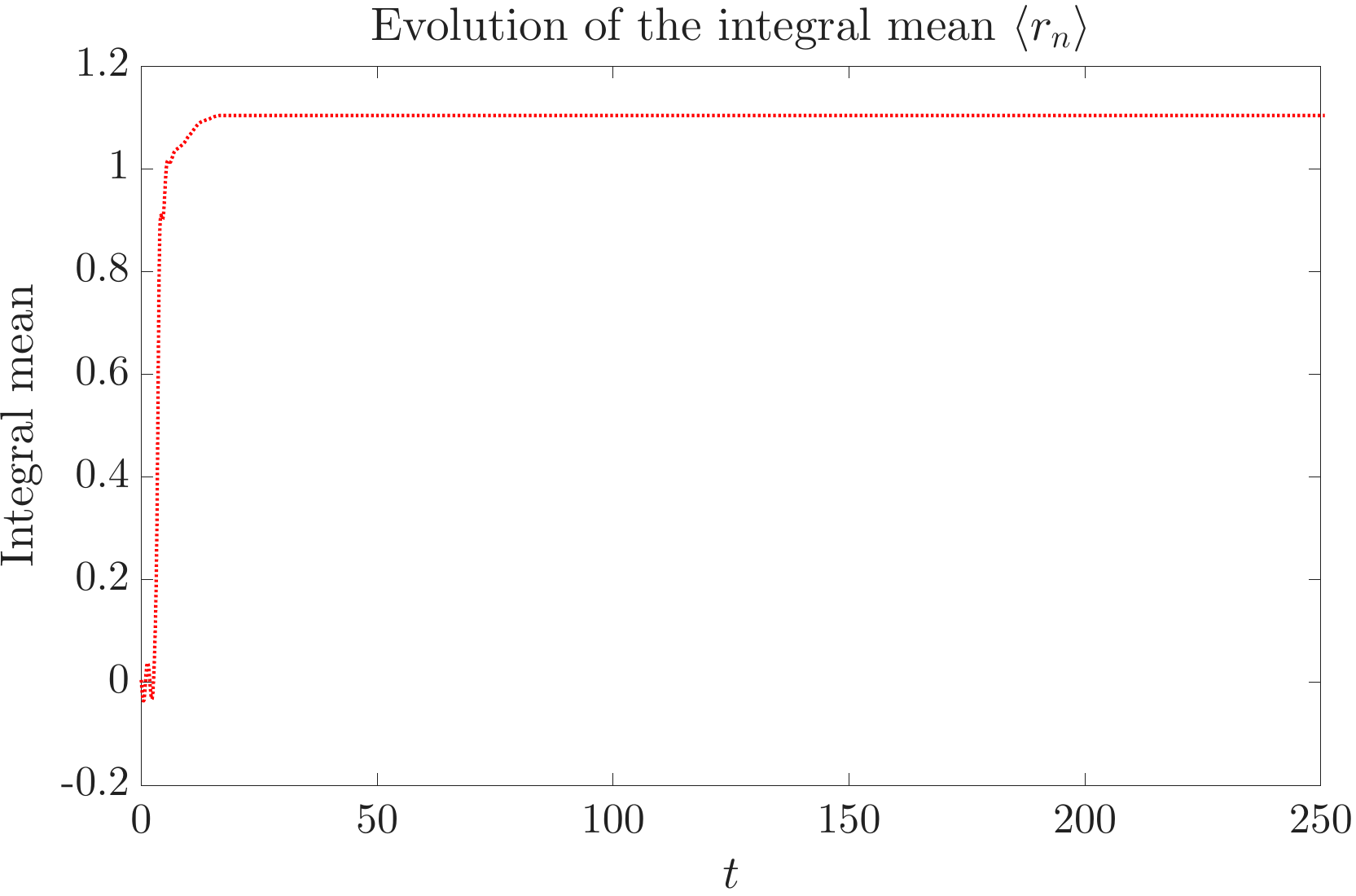}
    \includegraphics[scale=0.165]{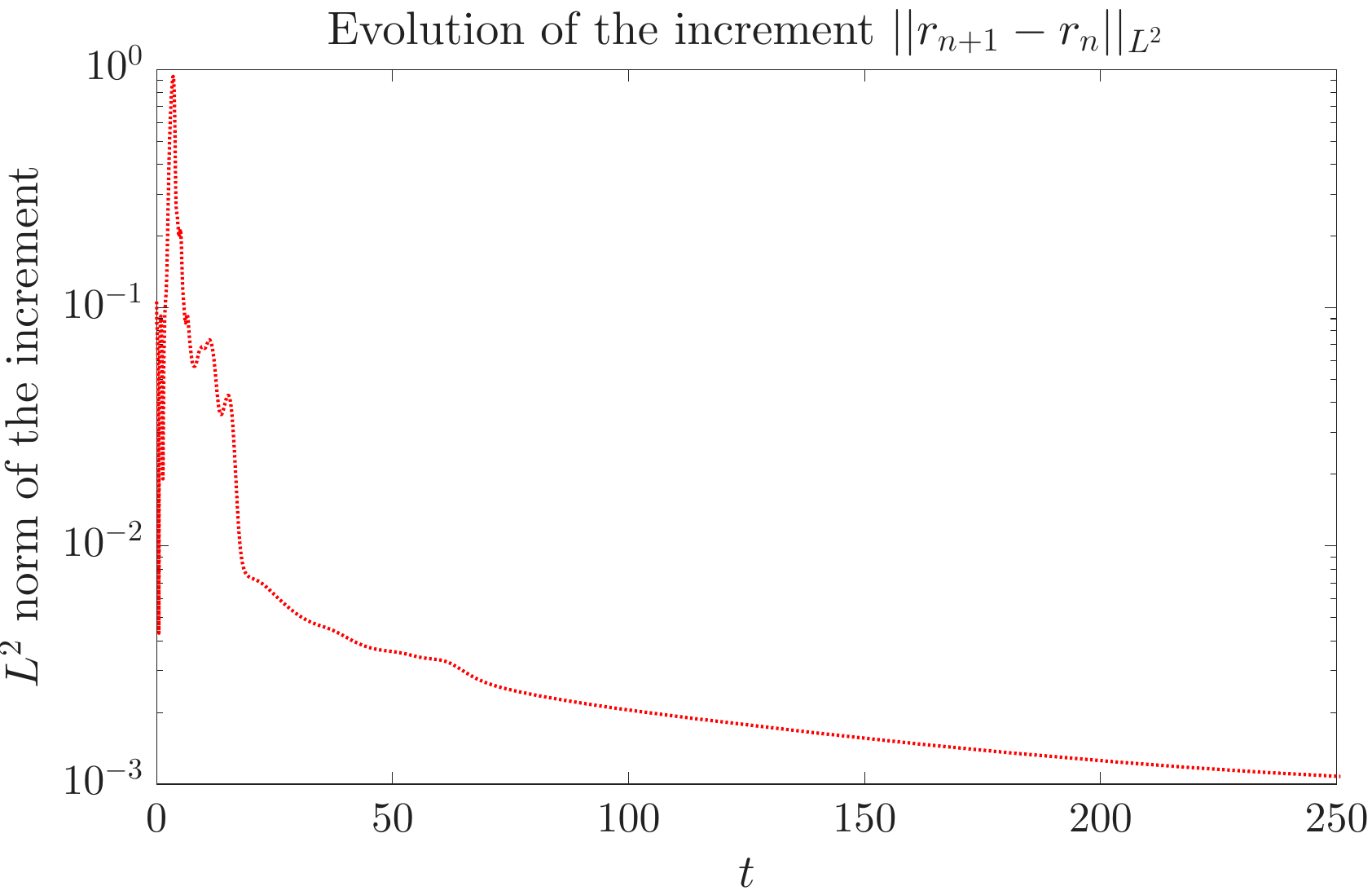}
    \caption{Results for the $r$ component of the experiment in
      Section~\ref{sec:cylinder} using
    the adaptation of integrators~\eqref{eq:ee_disk} and~\eqref{eq:ee_cylinder}
    to coupled systems. The DOF per
  direction are {\color{black}$(n_\rho,n_\theta,n_z)=(140,140,25)$}
  (total number of DOF $2\cdot 490000 + 2\cdot 19600$),
  and the number of time steps is {\color{black}$m=10000$. Left: $r$ component
  at the final time. Center:
  evolution of the integral mean. Right: evolution of the increment.
  The total simulation wall-clock time is about 4.6 minutes.}}
  \label{fig:plot_turing_cylinder_DIB_R}
\end{figure}
\begin{figure}[!htb]
  \centering
  \includegraphics[scale=0.165]{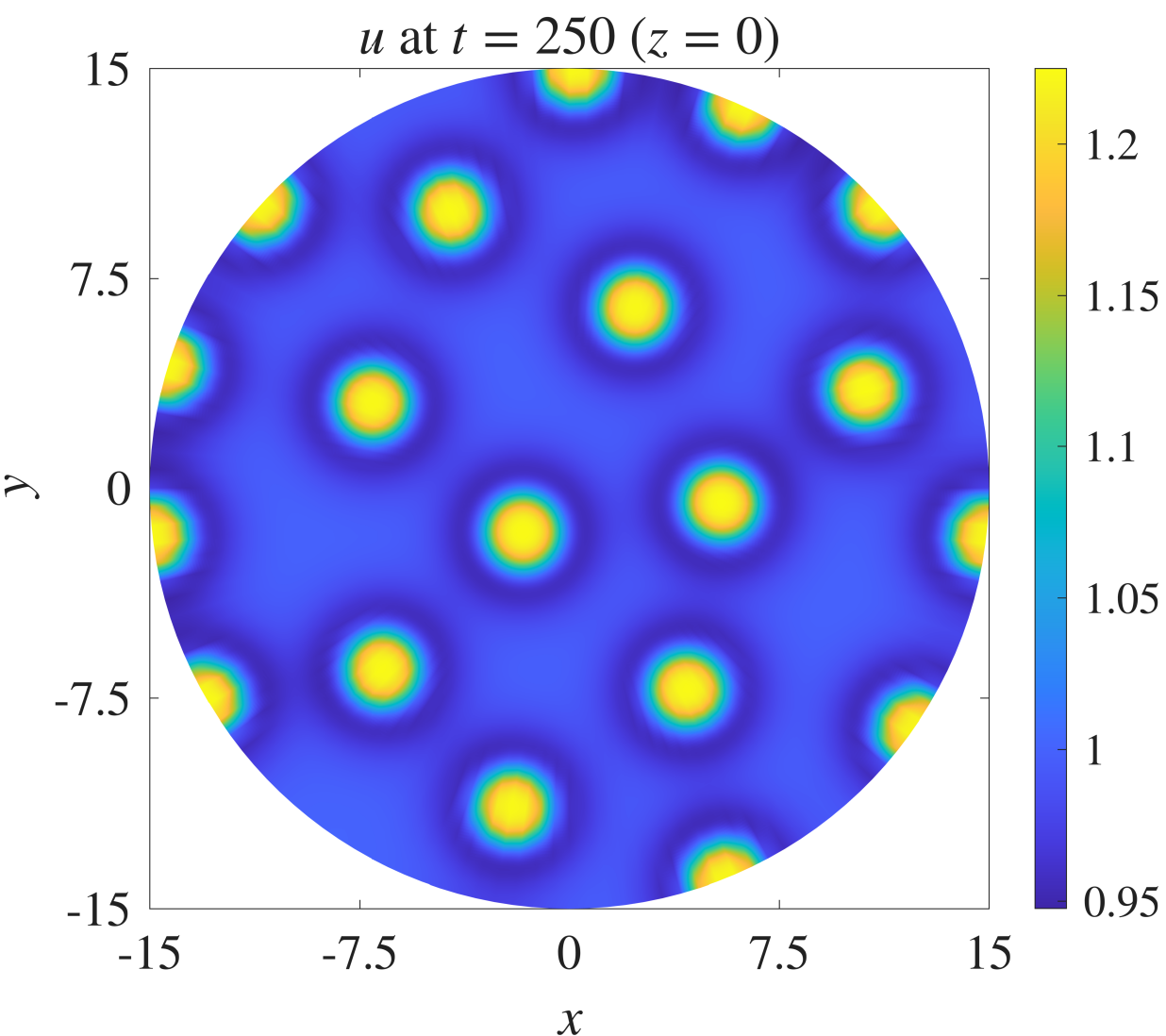}\hspace{1cm}
  \includegraphics[scale=0.165]{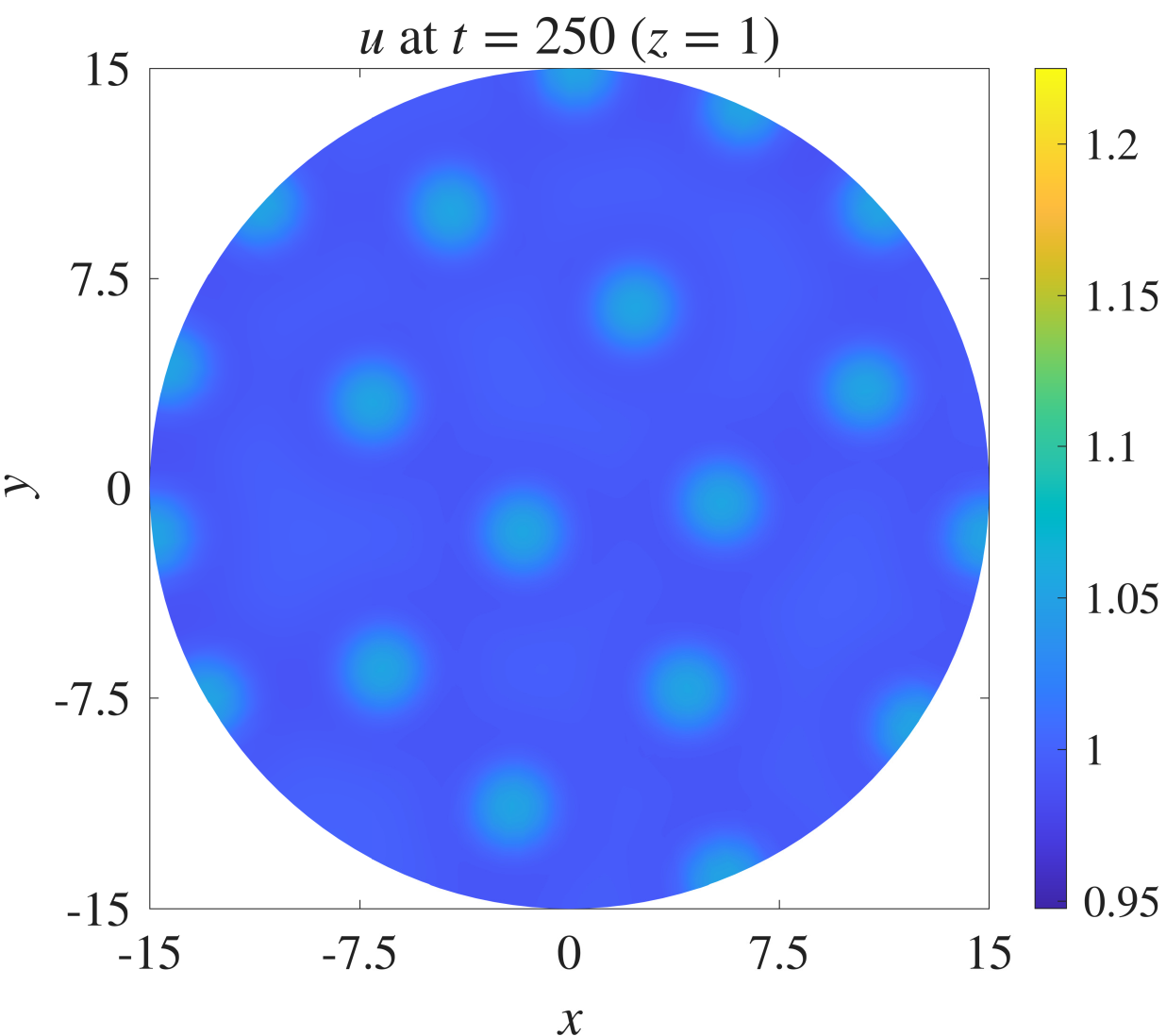}\\[2ex]
  \includegraphics[scale=0.165]{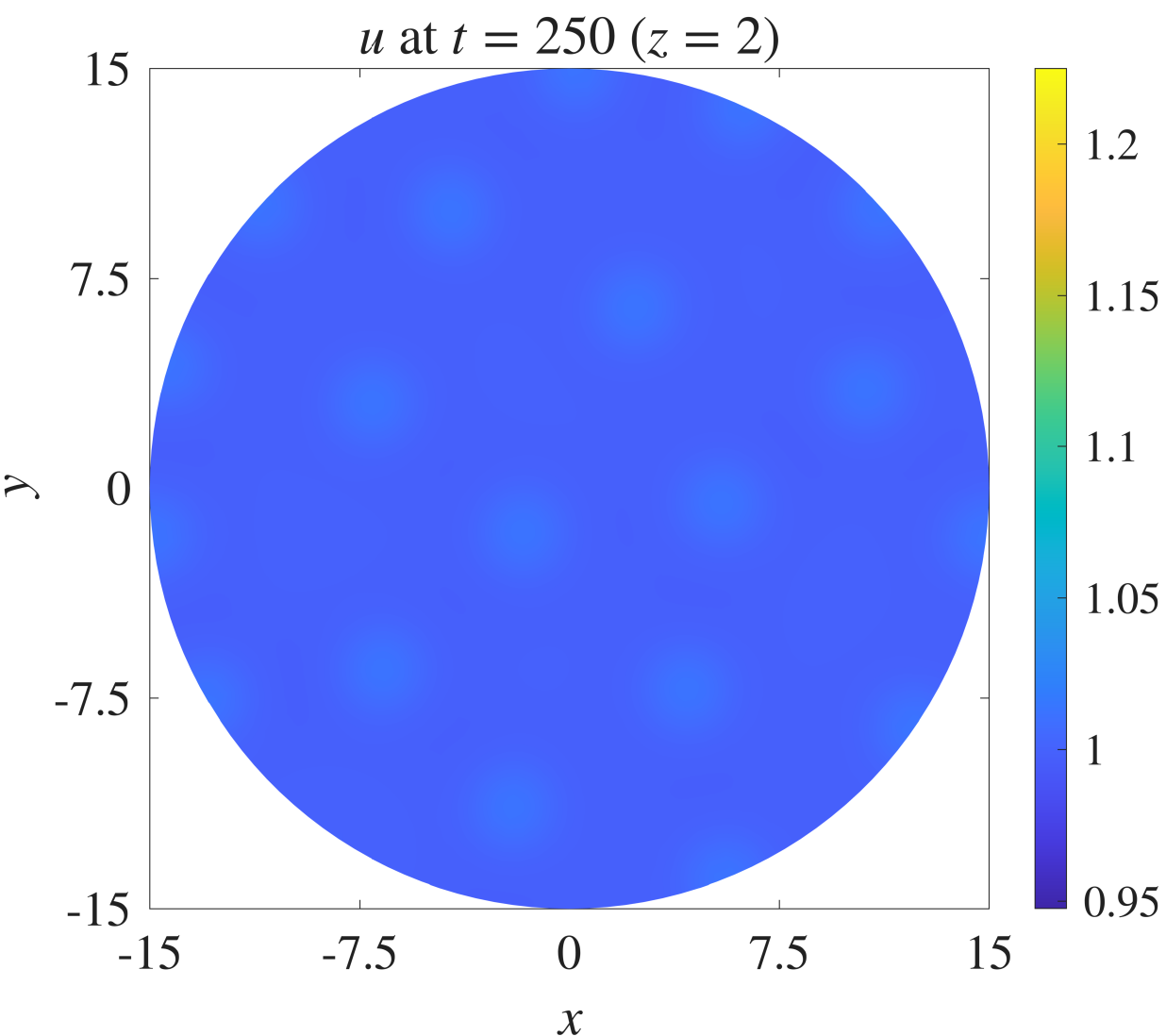}\hspace{1cm}
  \includegraphics[scale=0.165]{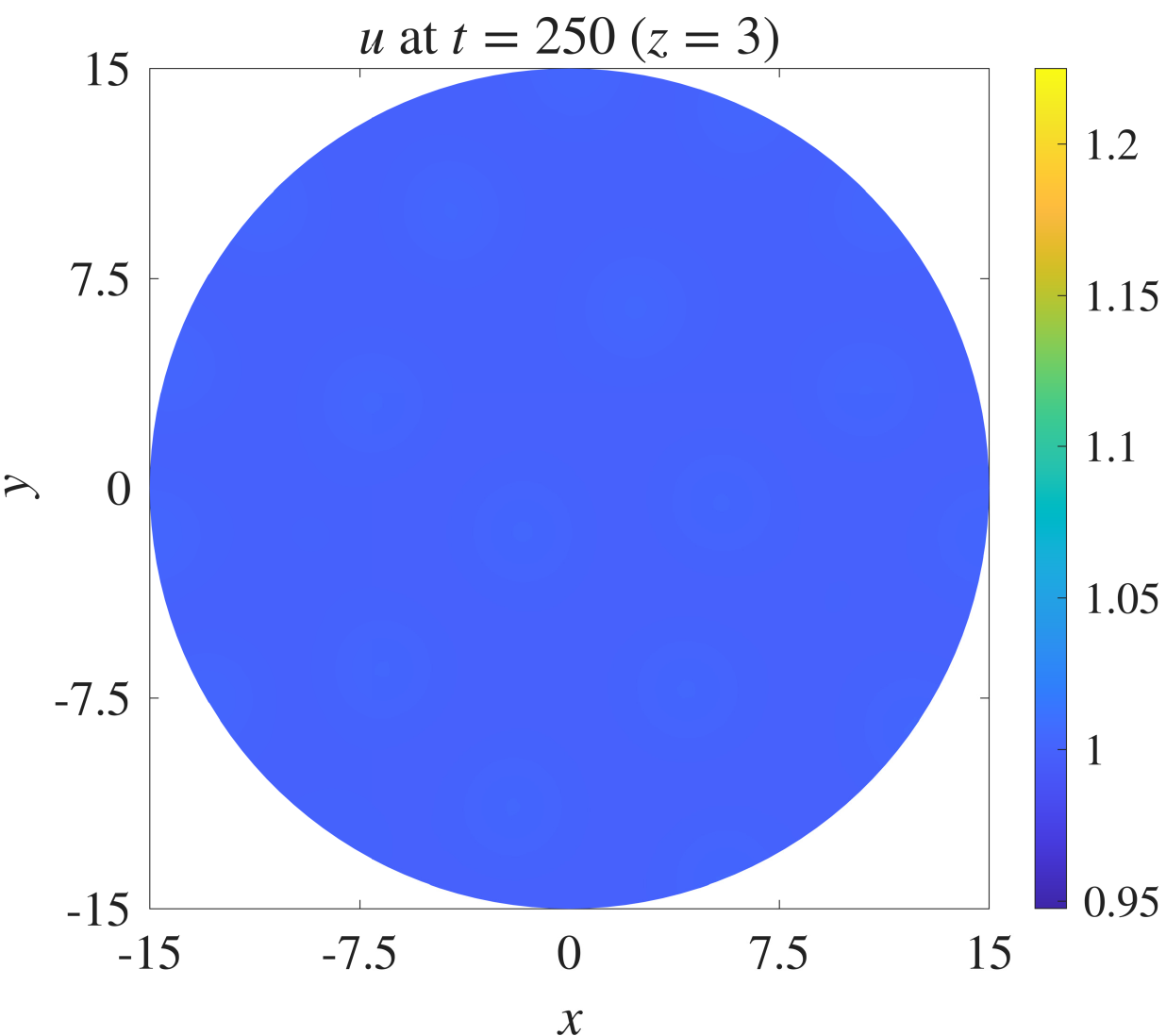}
  \caption{Results for the $u$ component of the experiment in
  Section~\ref{sec:cylinder} using the adaptation of
    integrators~\eqref{eq:ee_disk} and~\eqref{eq:ee_cylinder} to coupled
    systems, see also
    Figure~\ref{fig:plot_turing_cylinder_DIB_R}.
  }
  \label{fig:plot_turing_cylinder_DIB_U}
\end{figure}

\section{Conclusions}\label{sec:conclusions}
In this manuscript, we proposed efficient matrix and tensor realizations
of a novel split exponential Euler integrator for the solution of a class of
evolutionary
equations in common curvilinear coordinates (such as polar, spherical, and
cylindrical systems).
The chosen discretization guarantees stability and
allows the evaluation of the relevant
quantities by using high-performance level 3 BLAS operations, thanks to
the $\mu$-mode framework. Numerical experiments on representative
two- and three-dimensional diffusion--reaction systems for Turing pattern
formation demonstrate both the robustness and the efficiency of
the proposed approach, with computational times ranging
from {\color{black}half a second to less than 5 minutes} for three-dimensional
simulations involving more than $10^6$ degrees of freedom.
Promising directions for future research include {\color{black}the
  extension to higher-order time integrators,} a rigorous stiff
error analysis of the scheme, as well as its extension to more
general differential operators and alternative geometric settings.

\section{Acknowledgments}
The authors are members of the Gruppo Nazionale Calcolo
Scientifico-I\-sti\-tu\-to Na\-zio\-na\-le di Alta Matematica (GNCS-INdAM).
Fabio Cassini was partially funded by INdAM.

\appendix
\section{On the discretization matrix $A_\phi$}\label{sec:Aphi}
In this appendix, we prove the existence of a unique solution
to system~\eqref{eq:syscn}, which is related to the discretization matrix
$A_\phi$ in formula~\eqref{eq:A_phi}. Moreover, we show that the transformation
matrix of $A_\phi$
is weakly well conditioned in the 2-norm for sufficiently large $n$.

By substituting $h_\phi$ in the second equation of~\eqref{eq:syscn},
the unknown $\sigma$ can be found by computing the root of
\begin{equation*}
  f(x)=\cot\left(\frac{x\pi}{n-1+2x}\right)-\frac{2(n-1+2x)}{\pi}.
\end{equation*}
This nonlinear equation has a unique solution, since
$f(x)$ changes
sign in the interval $(0,1/2)$ and $f'(x)<0$. In fact,
$f(x)\rightarrow +\infty$ as $x\rightarrow 0$.
Then, by noticing that the argument
of the cotangent is always less than $\pi/2$, and by exploiting that $\cot(y)<1/y$
for $0<y<\pi/2$, we get $f(1/2)<0$. Finally, by direct computation we get
\begin{equation*}
  f'(x) = -\left(\frac{4}{\pi}+\frac{(n-1)\pi\mathrm{csc}^2(\pi x/(2x+n-1))}{(2x+n-1)^2}\right)<0.
\end{equation*}
Hence, $\sigma\in(0,1/2)$.

Concerning the conditioning, we first show that the
solution $\sigma$ of~\eqref{eq:syscn} tends to $1/2$
for large $n$, which means that the discretization resembles a half-point
discretization at both endpoints of the interval.
We already proved that $\sigma<1/2$. Then, using system~\eqref{eq:syscn} and
exploiting the inequality $\cot(y)>1/y+1/(y-\pi)$ for $0<y<\pi/2$, we get
\begin{equation*}
  \frac{2(n-1+2\sigma)}{\pi} > \frac{(n-1+2\sigma)}{\sigma\pi}
  + \frac{1}{\frac{\sigma\pi}{(n-1+2\sigma)}-\pi}
  \iff \sigma^2+(n-1)\sigma - (n-1)/2 > 0.
\end{equation*}
The relevant solution is
\begin{equation*}
  \sigma > \frac{-(n-1)+\sqrt{n^2-1}}{2}
\end{equation*}
and the right hand side tends to $1/2$ for $n$ large. Therefore, we conclude
by the sandwich theorem.
Thanks to this result, and using again the above inequalities for the cotangent
function, we can easily obtain that the condition number of the diagonal
symmetrization matrix of $A_\phi$ grows as $\sqrt{n}$.

\section{On the discretization matrix of
  $\Delta_\lambda$}\label{sec:A_lambda}
In this appendix, we show that the transformation
matrix of the discretization matrix of $\Delta_\lambda$ for $\lambda<0$
(see formula~\eqref{eq:A_lambda})
is weakly well conditioned in the 2-norm.
The element $\xi_\ell$ of the diagonal symmetrization matrix $\Xi$ is
\begin{equation*}
  \sqrt{\frac{(1-\lambda)^{1+\lambda}}{((2\ell-1)-\lambda)^{1+\lambda}}
  \frac{(\ell-1)!}{\prod_{i=1}^{\ell-1}(i-\lambda)}},\quad \ell\ge 2.
\end{equation*}
The smallest value is attained at $\ell=n$ and therefore the
largest value in $\Xi^{-1}$ is
\begin{equation*}
  \sqrt{\left(\frac{(2n-1)-\lambda}{1-\lambda}\right)^{1+\lambda}P_{n-1}(\lambda)},
  \quad   P_{n-1}(\lambda)=\frac{\prod_{i=1}^{n-1} (i-\lambda)}{(n-1)!}.
\end{equation*}
Taking the logarithm, we have
\begin{equation*}
  \begin{aligned}
  \log P_{n-1}(\lambda) &= \log (1-\lambda)+
  \log(1-\frac{\lambda}{2})\dots+\log(1-\frac{\lambda}{n-1})\\
  &<-\lambda(1+\frac{1}{2}+\dots+\frac{1}{n-1})<-\lambda(\ln (n-1)+1).
\end{aligned}
  \end{equation*}
Therefore,
\begin{equation*}
P_n(\lambda)< \rme^{-\lambda(\ln (n-1)+1)}=\mathcal{O}(n^{-\lambda}).
\end{equation*}
Hence, the largest element of $\Xi^{-1}$ grows as $\sqrt{n}$.
Since $\lVert \Xi \rVert_2 = 1$ we get the result.

\bibliography{CC26.bib}

@article{FS24,
  author = {M. Frittelli and I. Sgura},
  title = {Matrix-oriented {FEM} formulation for reaction-diffusion {PDE}s
on a large class of {2D} domains},
  journal = {Appl. Numer. Math.},
  year = {2024},
  volume = {200},
  pages = {286--308},
  doi = {https://doi.org/10.1016/j.apnum.2023.07.010}
}

@article{HS21,
  author = {Y. Hao and V. Simoncini},
  title = {Matrix equations solving of {PDE}s in polygonal domains using conformal
mappings},
  journal = {J. Numer. Math.},
  volume = {29},
  number = {3},
  pages = {221--244},
  year = {2021},
  doi = {httsp://doi.org/10.1515/jnma-2020-0035}
}

@article{MJKL16,
  author = {A. Mantzaflaris and B. J\"uttler and Khoromskij, B. N. and U. Langer},
  title = {Low rank tensor methods in {G}alerkin-based isogeometric analysis},
  journal = {Comput. Methods Appl. Mech. Engrg.},
  volume = {316},
  pages = {1062--1085},
  year = {2016},
  doi = {http://doi.org/10.1016/j.cma.2016.11.013}
}

@article{KB09,
  author = {Kolda, T. G. and Bader, B. W.},
  title = {Tensor decompositions and applications},
  journal = {SIAM Rev.},
  volume = {51},
  number = {3},
  pages = {455--500},
  year = {2009},
  doi = {https://doi.org/10.1137/07070111X}
}

@article{AMH11,
  author = {Al-Mohy, A. H. and Higham, N. J.},
  title = {Computing the {A}ction of the {M}atrix {E}xponential, with an {A}pplication
  to {E}xponential {I}ntegrators},
  journal = {SIAM J. Sci. Comput.},
  volume = {33},
  number = {2},
  pages = {488--511},
  year = {2011},
  doi = {https://doi.org/10.1137/100788860}
}

@article{LPR19,
  author = {Luan, V. T. and Pudykiewicz, J. A. and Reynolds, D. R.},
  title = {Further development of efficient and accurate time integration schemes for meteorological models},
  journal = {J. Comput. Phys.},
  volume = {376},
  pages = {817--837},
  year = {2019},
  doi = {https://doi.org/10.1016/j.jcp.2018.10.018}
}

@article{CCZ23,
  author = {M. Caliari and F. Cassini and F. Zivcovich},
  title = {{BAMPHI}: {M}atrix-free and transpose-free action of linear
            combinations of $\varphi$-functions from exponential integrators},
  journal = {J. Comput. Appl. Math.},
  volume = {423},
  pages = {114973},
  year = {2023},
  doi = {https://doi.org/10.1016/j.cam.2022.114973}
}

@article{HHHNLHC17,
  author = {Hern\'andez, D. and Herrera-Hern\'andez, E. C. and
  N\'u\~nez-L\'opez, M. and Hern\'andez-Coronado, H.},
  title = {Self-similar {T}uring patterns: {A}n anomalous diffusion consequence},
  journal = {Phys. Rev. E},
  volume = {95},
  pages = {022210},
  year = {2017},
  doi = {https://doi.org/10.1103/PhysRevE.95.022210}
}

@article{BKW18,
  author = {Bhatt, H.~P. and Khaliq, A.~Q.~M. and Wade, B.~A.},
  title = {Efficient {K}rylov-based exponential time differencing method
  in application to 3{D} advection-diffusion-reaction systems},
  journal = {Appl. Math. Comput.},
  volume = {338},
  pages = {260--273},
  year = {2018},
  doi = {https://doi.org/10.1016/j.amc.2018.06.025}
}

@article{S79,
  author = {Schnakenberg, J.},
  title = {Simple chemical reaction systems with limit cycle behaviour},
  journal = {J. Theor. Biol.},
  volume = {81},
  pages = {389--400},
  year = {1979},
  doi = {https://doi.org/10.1016/0022-5193(79)90042-0}
}

@article{GLRS19,
  author = {Gambino, G. and Lombardo, M.~C. and Rubino, G. and Sammartino, M.},
  title = {Pattern selection in the 2{D} {F}itz{H}ugh--{N}agumo model},
  journal = {Ric. Mat.},
  volume = {68},
  pages = {535--549},
  year = {2019},
  doi = {https://doi.org/10.1007/s11587-018-0424-6}
}

@article{CC25,
  author = {Caliari, M. and Cassini, F.},
  title = {Matrix- and tensor-oriented numerical schemes for the evolutionary
           space-fractional complex {G}inzburg--{L}andau equation},
  journal = {arXiv preprint arXiv:2510.21394},
  year = {2025},
  doi = {https://doi.org/10.48550/arXiv.2510.21394}
}

@article{CCEOZ22,
  author = {M. Caliari and F. Cassini and L. Einkemmer and
  A. Ostermann and F. Zivcovich},
  journal = {J. Comput. Phys.},
  pages = {110989},
  title = {A $\mu$-mode integrator for solving evolution
  equations in {K}ronecker form},
  volume = {455},
  year = {2022},
  doi = {https://doi.org/10.1016/j.jcp.2022.110989}
}

@article{CCZ23bis,
  author = {M. Caliari and F. Cassini and F. Zivcovich},
  title = {A $\mu$-mode {BLAS} approach for multidimensional tensor-structured
  problems},
  journal = {Numer. Algorithms},
  year = {2023},
  volume = {92},
  number = {4},
  pages = {2483--2508},
  doi = {https://doi.org/10.1007/s11075-022-01399-4}
}

@article{CC24bis,
  author = {Caliari, M. and Cassini, F.},
  title = {A second order directional split exponential integrator
           for systems of advection--diffusion--reaction equations},
  journal = {J. Comput. Phys.},
  year = {2024},
  volume = {498},
  pages = {112640},
  doi = {https://doi.org/10.1016/j.jcp.2023.112640}
}

@article{CC24,
  author = {Caliari, M. and Cassini, F.},
  title = {Direction splitting of $\varphi$-functions in exponential integrators
           for $d$-dimensional problems in {K}ronecker form},
  journal = {J. Approx. Softw.},
  year = {2024},
  volume = {1},
  number = {1},
  pages = {1},
  doi = {https://doi.org/10.13135/3103-1935/10813}
}

@article{VAB99,
  author = {Varea, C. and Arag\'on, J. L. and Barrio, R. A.},
  title = {Turing patterns on a sphere},
  journal = {Phys. Rev. E},
  volume = {60},
  number = {4},
  pages = {4588--4592},
  year = {1999},
  doi = {https://doi.org/10.1103/PhysRevE.60.4588}
}

@article{SMTT23,
  author = {Singh, S. and Mittal, R.~C. and Thottoli, S.~R. and Tamsir, M.},
  title = {High-fidelity simulations for {T}uring pattern formation in
           multi-dimensional {G}ray--{S}cott reaction-diffusion system},
  journal = {Appl. Math. Comput},
  volume = {452},
  pages = {128079},
  year = {2023},
  doi = {https://doi.org/10.1016/j.amc.2023.128079}
}

@article{ATGBM02,
  author = {Arag\'on, J. L. and Torres, M. and Gil, D. and Barrio, R.~A. and Maini, P.~K.},
  title = {Turing patterns with pentagonal symmetry},
  journal = {Phys. Rev. E},
  volume = {65},
  pages = {051913},
  year = {2002},
  doi = {https://doi.org/10.1103/PhysRevE.65.051913}
}

@article{FMS23,
  author = {Frittelli, M. and Madzvamuse, A. and Sgura, I.},
  title = {The bulk-surface virtual element method for reaction-diffusion
           {PDE}s: {A}nalysis and applications},
  journal = {Commun. Comput. Phys.},
  volume = {33},
  number = {3},
  pages = {733--763},
  year = {2023},
  doi = {https://doi.org/10.4208/cicp.OA-2022-0204}
}

@article{FSB24,
  author = {Frittelli, M. and Sgura, I. and Bozzini, B.},
  title = {Turing patterns in a 3{D} morpho-chemical bulk-surface reaction-diffusion system for battery modeling},
  journal = {Math. Eng.},
  volume = {6},
  number = {2},
  pages = {363--393},
  year = {2024},
  doi = {https://doi.org/10.3934/mine.2024015}
}

@article{LBFS17,
  author = {Lacitignola, D. and Bozzini, B. and Frittelli, M. and Sgura, I.},
  title = {Turing pattern formation on the sphere for a morphochemical reaction--diffusion model for electrodeposition},
  journal = {Commun. Nonlinear Sci. Numer. Simulat.},
  volume = {28},
  pages = {484--508},
  year = {2017},
  doi = {http://doi.org/10.1016/j.cnsns.2017.01.008}
}

@article{LHAMMHH24,
  author = {L\'opez, A. V. and Hern\'andez, D. and Aguilar-Madera, C. G. and
            Mart\'inez, R. C. and Herrera-Hern\'andez, E. C.},
  title = {Boundary conditions influence on {T}uring patterns under anomalous diffusion: {A} numerical exploration},
  journal = {Phys. D},
  volume = {470},
  pages = {134353},
  year = {2024},
  doi = {https://doi.org/10.1016/j.physd.2024.134353}
}

@article{L00,
  author = {M.-C. Lai},
  title = {A note on finite difference discretizations for {P}oisson equation on a disk},
  journal = {Numer. Methods Partial Differ. Equ.},
  volume = {17},
  number = {3},
  pages = {199--203},
  year = {2000},
  doi = {https://doi.org/10.1002/num.1}
}

@article{BT92,
  author = {L. Brugnano and D. Trigiante},
  title = {Tridiagonal matrices: {I}nvertibility and conditioning},
  journal = {Linear Algebra Appl.},
  volume = {166},
  pages = {131--150},
  year = {1992},
  doi = {https://doi.org/10.1016/0024-3795(92)90273-D}
}

@book{HV03,
  author = {W. Hundsdorfer and Verwer, J. G.},
  title = {Numerical Solution of Time-Dependent Advection-Diffusion-Reaction
  Equations},
  publisher = {Springer Berlin, Heidelberg},
  series = {Springer Series in Computational Mathematics},
  volume = {33},
  year = {2003},
  doi = {https://doi.org/10.1007/978-3-662-09017-6}
}

@book{M25,
  author = {Meurant, G.},
  title = {Hessenberg and Tridiagonal Matrices},
  year = {2025},
  publisher = {SIAM},
  address = {Philadelphia, PA},
  edition = {first},
  series = {Other Titles in Applied Mathematics},
  doi = {https://doi.org/10.1137/1.9781611978452}
}

@article{BW15,
  author = {B. Bialecki and L. Wright},
  title = {A fast direct solver for a fourth order
           finite difference scheme for {P}oisson's equation
           on the unit disc in polar coordinates},
  journal = {Numer. Algorithms},
  volume = {70},
  pages = {727--751},
  year = {2015},
  doi = {https://doi.org/10.1007/s11075-015-9971-z}
}

@book{B86,
  author = {Britton, N. F.},
  title = {Reaction-Diffusion Equations and Their Applications to Biology},
  year = {1986},
  publisher = {Academic Press},
  address = {New York, NY},
}

@book{M03,
  title={Mathematical Biology II: Spatial Models and Biomedical Applications},
  author={Murray, J. D.},
  volume={18},
  year={2003},
  publisher={Springer},
  edition = {third},
  series = {Interdisciplinary Applied Mathematics}
}

@book{S94,
  title={Shock waves and reaction--diffusion equations},
  author={Smoller, J.},
  edition = {second},
  year={1994},
  publisher={Springer},
  address = {New York, NY},
  series = {Grundlehren der mathematischen Wissenschaften},
  doi = {https://doi.org/10.1007/978-1-4612-0873-0}
}

@book{HW96,
  author = {E. Hairer and G. Wanner},
  title = {Solving Ordinary Differential Equations II: Stiff and
  Differential Algeraic Problems},
  edition = {second},
  year = {1996},
  publisher = {Springer Berlin},
  series = {Springer Series in Computational Mathematics},
  volume = {14},
  doi = {https://doi.org/10.1007/978-3-642-05221-7}
}

@article{ARW95,
  author = {Ascher, U. M. and Ruuth, S. J. and Wetton, B. T. R.},
  title = {Implicit-{E}xplicit {M}ethods for {T}ime-{D}ependent
           {P}artial {D}ifferential {E}quations},
  journal = {SIAM J. Numer. Anal.},
  volume = {32},
  number = {2},
  pages = {797--823},
  year = {1995},
  doi = {https://doi.org/10.1137/0732037}
}

@article{HO10,
  author = {M. Hochbruck and A. Ostermann},
  title = {Exponential integrators},
  journal = {Acta Numer.},
  volume = {19},
  pages = {209--286},
  year = {2010},
  doi = {https://doi.org/10.1017/S0962492910000048}
}

@article{DASS20,
  author = {M. C. D'Autilia and I. Sgura and V. Simoncini},
  title = {Matrix-oriented discretization methods for reaction--diffusion {PDE}s: {C}omparisons and applications},
  journal = {Comput. Math. Appl.},
  volume = {79},
  number = {7},
  pages = {2067--2085},
  year = {2020},
  doi = {https://doi.org/10.1016/j.camwa.2019.10.020}
}

@article{BS17,
  author = {M. Benzi and V. Simoncini},
  title = {Approximation of functions of large matrices with {K}ronecker
  structure},
  journal = {Numer. Math.},
  year = {2017},
  volume = {135},
  pages = {1--26},
  doi = {https://doi.org/10.1007/s00211-016-0799-9}
}

@article{CK20,
  author = {M. Chen and D. Kressner},
  title = {Recursive blocked algorithms for linear systems with {K}ronecker product structure},
  journal = {Numer. Algorithms},
  volume = {84},
  number = {3},
  pages = {1199--1216},
  year = {2020},
  doi = {https://doi.org/10.1007/s11075-019-00797-5}
}

@article{C24,
  author = {F. Cassini},
  title = {Efficient third-order tensor-oriented directional splitting for exponential integrators},
  journal = {Electron. Trans. Numer. Anal.},
  year = {2024},
  volume = {60},
  pages = {520--540},
  doi = {https://doi.org/10.1553/etna_vol60s520}
}

@article{KST99,
  author = {D. Kulkarni and D. Schmidt and S.-K. Tsui},
  title = {Eigenvalues of tridiagonal pseudo-{T}oeplitz matrices},
  journal = {Linear Algebra Appl.},
  volume = {297},
  pages = {63--80},
  year = {1999},
  doi = {https://doi.org/10.1016/S0024-3795(99)00114-7}
}

@article{VL00,
  author = {Van Loan, C. F.},
  title = {The ubiquitous {K}ronecker product},
  journal = {J. Comput. Appl. Math.},
  volume = {123},
  pages = {85--100},
  year = {2000},
  doi = {https://doi.org/10.1016/S0377-0427(00)00393-9}
}

@article{GRT18,
  author = {S. Gaudreault and G. Rainwater and M. Tokman},
  title = {{KIOPS: A} fast adaptive {K}rylov subspace solver
  for exponential integrators},
  journal = {J. Comput. Phys.},
  volume = {372},
  pages = {236--255},
  year = {2018},
  doi = {https://doi.org/10.1016/j.jcp.2018.06.026}
}
\bibliographystyle{plain}
\end{document}